\renewcommand{\le}{\leqslant}
\renewcommand{\ge}{\geqslant}
\DeclareMathOperator{\Wig}{Wig}
\DeclareMathOperator{\im}{Im}
\renewcommand{\Im}{\im}
\newcommand{\abs}[1]{\left\lvert #1\right\rvert}
\newcommand{\norm}[1]{\left\lVert #1\right\rVert}
\newcommand{\bigabs}[1]{\bigl\lvert #1\bigr\rvert}
\newcommand{\Bigabs}[1]{\Bigl\lvert #1\Bigr\rvert}
\theoremstyle{definition}
\newtheorem{theorem}{Theorem}
\newtheorem{proposition}[theorem]{Proposition}
\newtheorem{corollary}[theorem]{Corollary}
\newtheorem{definition}[theorem]{Definition}
\theoremstyle{remark}
\newtheorem*{remark}{Remark}
\title{{\huge \bfseries
Regularity of a class \\  of  differential operators.
}}
\author{Ernesto Buzano and Alessandro Oliaro\\
\parbox{8cm}{\begin{center}\small Dipartimento di Matematica, Universit\`a di Torino \\
 Via Carlo Alberto 10, 10123 Torino, Italy
\end{center}}}
\date{}
\begin{document}

\selectlanguage{english}
\maketitle

\section{Introduction}

In this paper we deal with the problem of regularity for non
hypo-elliptic partial differential equations with polynomial
coefficients. An operator $A:\mathcal{S}^\prime\rightarrow
\mathcal{S}^\prime$ is regular if $u$ is a Schwartz function
whenever $Au$ is a Schwartz function. It is well known that
hypo-elliptic partial differential operators in the sense of
Definition 25.2 of \cite{Shubin} are regular. On the other
hand, the
problem of finding necessary and sufficient conditions for
the
regularity of a differential operator with polynomial coefficients is still  open.

In the case of
ordinary differential equations, in \cite{Nicola-Rodino} necessary and
sufficient
conditions for regularity are found under additional hypotheses. For partial
differential equations the problem is much more
complicated. We
refer for example to \cite{Wong}, where the regularity of the
twisted laplacian is proved, by  explicit
computation of the
heat kernel and Green function. The twisted laplacian
can be
viewed as a Schr\"odinger operator perturbed with
electromagnetic
field; it is intimately connected with the sub-laplacian on
the
Heisenberg group, see \cite{Thangavelu}, and is also
studied
from the point of view of spectral theory, see for example
\cite{Koch-Ricci}.

In this paper we follow a new approach, related to transformations
of Wigner type, to prove regularity of a wide class of non
hypo-elliptic partial differential operators with polynomial
coefficients. The approach consists in applying a Wigner-like
transform to a general partial differential equation. This idea is
already present in some works related to engineering applications,
cf. \cite{Galleani-Cohen-1}, \cite{Galleani-Cohen-2}. In these
papers some equations are analyzed, looking for the Wigner transform
of the solution. Instead of finding  first a solution $u$, and then
computing its Wigner transform $\Wig[u]$, the equation itself is
Wigner-transformed  obtaining an equation in $\Wig[u]$. In this way
it is possible to find,  in some cases, the exact expression of
$\Wig[u]$.

In the present paper we use tensor products of topological vector
spaces in order to apply the Wigner transform technique to the study
of regularity in the Schwartz space of partial differential
operators with polynomial coefficients.  We refer to Section
\ref{sec:1} for a precise statement of the result; the idea is that
a linear (hypo-elliptic) operator is associated to a linear non
hypo-elliptic one, and the Wigner-like transform allows to transfer
the regularity from an operator to the other one. In this way we
easily recover the regularity of the twisted laplacian, as well as
of generalized versions of it. Moreover, we analyze several other
examples, as general second order operators and operators associated
to a complete Newton polygon.

In this paper, we consider only partial differential equations in two variables.
There is no difficulty to generalize the results to an
arbitrary
even number of variables; we do not present our  results
in such a
generality to keep the  formalism as simple as possible.

The paper is organized as follows. In Section \ref{sec:3}
we give
general results concerning regularity and tensor product of
operators; in Section \ref{sec:1} we prove the main
result, and in
Section \ref{sec:4} we apply such a result in concrete cases,
by analyzing several examples.

\section{Regularity and extension of the variables.}\label{sec:3}

Given  a linear operator $A$ on $\mathcal S'(\mathbb R)$ such that
$A\bigl(\mathcal S(\mathbb R)\bigr)\subset L^2(\mathbb R)$, we
denote by $A^\ast$ the adjoint of $A_{\vert \mathcal S}$ with
respect to the inner product on $L^2(\mathbb R)$. The domain
$\mathcal D(A^\ast)$ of $A^\ast$ is given by all $u\in L^2(\mathbb
R)$ for which there exists $v\in L^2(\mathbb R)$ such that
\begin{equation*}
(A\phi\,\vert \, u)_{L^2}=(\phi\,\vert\,v)_{L^2},\qquad \phi\in \mathcal S.
\end{equation*}

\begin{proposition}
Consider a continuous linear operator $A$ on $\mathcal S'(\mathbb
R)$. If   $A\bigl(\mathcal S(\mathbb R)\bigr)\subset \mathcal
S(\mathbb R)$, then  $\mathcal S(\mathbb R)\subset \mathcal
D(A^\ast)$,   $A^{\ast\ast}$ is the closure of $A_{\vert \mathcal
S}$ in $L^2(\mathbb R)$ and $A_{\vert \mathcal S}$ is continuous on
$\mathcal S(\mathbb R)$.
\end{proposition}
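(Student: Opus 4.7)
The plan is to verify the three assertions in the order stated, starting with the continuity claim (iii) which is independent of the other two, then deducing (i) from the transpose of $A$ on $\mathcal{S}'$, and finally obtaining (ii) from a standard Hilbert-space closure lemma.

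For the continuity of $A|_{\mathcal{S}}$ on $\mathcal{S}(\mathbb{R})$, I would use the closed graph theorem, which applies since $\mathcal{S}(\mathbb{R})$ is a Fréchet space. Suppose $u_n\to u$ and $Au_n\to v$ in $\mathcal{S}$; the continuous inclusion $\mathcal{S}\hookrightarrow \mathcal{S}'$ transports both convergences to $\mathcal{S}'$, and the hypothesis that $A$ is continuous on $\mathcal{S}'$ forces $Au_n\to Au$ there, so $v=Au$.

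For the inclusion $\mathcal{S}(\mathbb{R})\subset \mathcal{D}(A^\ast)$, I would pass through the transpose of $A$ with respect to the bilinear distributional pairing $\langle\cdot,\cdot\rangle$. Because $A:\mathcal{S}'\to\mathcal{S}'$ is continuous and $\mathcal{S}$ is reflexive, a continuous transpose $A^t:\mathcal{S}\to\mathcal{S}$ exists and is characterized by $\langle Au,\phi\rangle=\langle u,A^t\phi\rangle$. For $\psi\in\mathcal{S}$ and arbitrary $\phi\in\mathcal{S}$, inserting $\bar\psi$ as test function and bookkeeping the complex conjugation that separates the $L^2$ inner product from the bilinear pairing gives
\begin{equation*}
(A\phi\,\vert\,\psi)_{L^2}=\int (A\phi)(x)\,\overline{\psi(x)}\,dx=\langle A\phi,\bar\psi\rangle=\langle \phi, A^t\bar\psi\rangle=\bigl(\phi\,\bigm\vert\,\overline{A^t\bar\psi}\,\bigr)_{L^2},
\end{equation*}
so $\psi\in \mathcal{D}(A^\ast)$ with the explicit formula $A^\ast\psi=\overline{A^t\bar\psi}\in\mathcal{S}$.

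Finally, for $A^{\ast\ast}=\overline{A|_{\mathcal{S}}}$, I would invoke the standard fact that a densely defined operator on a Hilbert space is closable if and only if its adjoint is densely defined, in which case its closure equals the double adjoint. Here $\mathcal{S}$ is dense in $L^2(\mathbb{R})$, and by the preceding step $\mathcal{D}(A^\ast)\supset \mathcal{S}$ is also dense, so the lemma applies verbatim. The main technical point is producing $A^t$ as a continuous endomorphism of $\mathcal{S}$ out of the mere continuity of $A$ on $\mathcal{S}'$; this is precisely where reflexivity of $\mathcal{S}$ is needed, and once it is in hand the remaining steps are routine.
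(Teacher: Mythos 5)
Your proposal is correct. Two of your three steps coincide with the paper's: the continuity of $A_{\vert\mathcal S}$ is obtained by exactly the same closed graph argument (transporting convergence into $\mathcal S'$ and using continuity of $A$ there), and the identification $A^{\ast\ast}=\overline{A_{\vert\mathcal S}}$ rests on the same standard Hilbert-space fact that a densely defined operator with densely defined adjoint is closable with closure equal to its double adjoint. Where you genuinely diverge is the middle step, $\mathcal S(\mathbb R)\subset\mathcal D(A^\ast)$: the paper fixes $\psi\in\mathcal S$ and observes that $u\mapsto\langle Au,\overline\psi\rangle$ is a continuous functional on $L^2$ (via the continuous embedding $L^2\hookrightarrow\mathcal S'$ and continuity of $A$ on $\mathcal S'$), then invokes the Riesz representation theorem to produce $v\in L^2$ with $A^\ast\psi=v$; you instead use reflexivity of $\mathcal S$ to form the transpose $A^t:\mathcal S\to\mathcal S$ of $A$ and read off the explicit formula $A^\ast\psi=\overline{A^t\overline\psi}$. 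Your route needs a little more topological vector space machinery (reflexivity and continuity of the transpose for strong duals) but buys more: it shows $A^\ast$ maps $\mathcal S$ into $\mathcal S$, not merely into $L^2$, whereas the paper's Riesz argument is more elementary and only uses the continuous inclusion of $L^2$ into $\mathcal S'$. Both arguments are sound; just make sure, as you implicitly do, that the hypothesis $A(\mathcal S)\subset\mathcal S$ is what legitimizes writing $(A\phi\,\vert\,\psi)_{L^2}$ as the distributional pairing $\langle A\phi,\overline\psi\rangle$.
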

\begin{proof}
For all $\psi\in \mathcal S(\mathbb R)$,
\begin{equation*}
u\mapsto  \langle A u\,\vert\,\overline\psi\rangle
\end{equation*}
is a continuous functional on $L^2(\mathbb R)$. Then there exists $v\in L^2(\mathbb R)$ such that
\begin{equation*}
\langle A u\,\vert\,\overline\psi\rangle=(u\,\vert\,v)_{L^2},\qquad u\in L^2(\mathbb R).
\end{equation*}
This implies that $\psi\in \mathcal D(A^\ast)$ and that $A^\ast \psi=v$. Since $\mathcal S(\mathbb R)$ is dense in $L^2(\mathbb R)$, it follows that $A^{\ast\ast}$ is a closed extension of $A_{\vert \mathcal S}$.

Consider a sequence  $(u_n)$  in $\mathcal S(\mathbb R)$ such that $u_n\to u$ and $A u_n \to v$ in $\mathcal S(\mathbb R)$. Then $u_n\to u$ and $A u_n\to v$ in $\mathcal S'(\mathbb R)$. But $A$ is continuous on $\mathcal S'(\mathbb R)$, hence $A u=v$. Thus $A$ is a closed operator on $\mathcal S(\mathbb R)$. Therefore $A$ is continuous on $\mathcal S(\mathbb R)$, by Closed Graph Theorem.
\end{proof}

\begin{definition}
A linear operator $A$ on $\mathcal S'(\mathbb R)$ is
\emph{regular} if
\begin{equation*}
Au\in  \mathcal S(\mathbb R) \implies u\in \mathcal S(\mathbb R),\qquad u\in\mathcal S'(\mathbb R).
\end{equation*}
\end{definition}

We denote by $E\widehat\otimes F$ the  \emph{topological} tensor products of two nuclear spaces $E$ and $F$.
Given two linear continuous operators $A_j:E_j\to F_j$, with $j\in\{1,2\}$, between nuclear spaces,
 $A_1\widehat\otimes A_2:E_1\widehat\otimes E_2\to F_1\widehat \otimes F_2$ is the unique continuous linear operator such that $A_1\otimes A_2(u_1\otimes u_2)=A_1(u_1)\otimes A_2(u_2)$, for all $(u_1,u_2)\in  E_1\times E_2$.

\smallskip
Given a linear operator $A$ on a vector space $E$, we denote by $\mathcal N(A)$ the subspace of the solutions to the equation $Au=0$.

\begin{theorem}\label{thm:9}
Consider a continuous linear operator $A$ on $\mathcal S'(\mathbb R)$ such that $A\bigl(\mathcal S(\mathbb R)\bigr)\subset \mathcal S(\mathbb R)$ and assume that
\begin{enumerate}
\item
$\mathcal N(A^\ast)\subset \mathcal S(\mathbb R)$,
\item\label{itm:3}
$\mathcal S(\mathbb R)=A\bigl(\mathcal S(\mathbb R)\bigr)\oplus
\mathcal N(A^\ast)$.
\item\label{itm:4}
$\mathcal S'(\mathbb R)=A\bigl(\mathcal S'(\mathbb R)\bigr)\oplus
\mathcal N(A^\ast)$,
\end{enumerate}
Let  $I$ be the identity operator on $\mathcal S'(\mathbb R)$.
Then $A$ is regular and one-to-one if and only if the tensor product $A\widehat \otimes I$ is regular on $\mathcal S'(\mathbb R^2)$.
\end{theorem}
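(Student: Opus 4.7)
The plan is to prove the two directions separately, with the forward direction being the substantial one: from the hypotheses (i)--(iii) and injectivity of $A$, I build a continuous left inverse $B$ of $A$ that preserves $\mathcal S(\mathbb R)$, and tensor it with the identity.

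\emph{Converse direction.} Assume $A\widehat\otimes I$ is regular. To obtain regularity of $A$, given $u\in\mathcal S'(\mathbb R)$ with $Au\in\mathcal S(\mathbb R)$, I would fix any $0\neq v\in\mathcal S(\mathbb R)$ and observe that $(A\widehat\otimes I)(u\otimes v)=Au\otimes v\in\mathcal S(\mathbb R^2)$; regularity of $A\widehat\otimes I$ then yields $u\otimes v\in\mathcal S(\mathbb R^2)$, and integrating in the second variable against some $\psi\in\mathcal S(\mathbb R)$ with $\langle v,\psi\rangle\neq 0$ extracts $u\in\mathcal S(\mathbb R)$. For injectivity of $A$, if $Au=0$ the same tensoring with an arbitrary $w\in\mathcal S'(\mathbb R)\setminus\mathcal S(\mathbb R)$ gives $(A\widehat\otimes I)(u\otimes w)=0\in\mathcal S(\mathbb R^2)$, hence $u\otimes w\in\mathcal S(\mathbb R^2)$; partial integration against $\phi\in\mathcal S(\mathbb R)$ in the first variable then gives $\langle u,\phi\rangle w\in\mathcal S(\mathbb R)$, which forces $\langle u,\phi\rangle=0$ for every $\phi$, i.e.\ $u=0$.

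\emph{Forward direction.} Assume $A$ is regular and one-to-one. The key step is the construction of a continuous operator $B$ on $\mathcal S'(\mathbb R)$ that restricts to a continuous operator on $\mathcal S(\mathbb R)$ and satisfies $BA=I$. Using (iii), let $P:\mathcal S'(\mathbb R)\to A\bigl(\mathcal S'(\mathbb R)\bigr)$ be the projection along $\mathcal N(A^\ast)$; since $A$ is injective, the restricted map $A:\mathcal S'(\mathbb R)\to A\bigl(\mathcal S'(\mathbb R)\bigr)$ is a bijection, and I set $B=A^{-1}\circ P$, so that $BA=I$ on $\mathcal S'(\mathbb R)$ and $AB=P$. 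Hypothesis (ii) together with uniqueness of the decomposition in (iii) shows that $P$, and hence $B$, preserves $\mathcal S(\mathbb R)$, using also (i) to locate $\mathcal N(A^\ast)$ inside $\mathcal S(\mathbb R)$. Continuity of $B$ on $\mathcal S(\mathbb R)$ and on $\mathcal S'(\mathbb R)$ is obtained from the closed graph theorem exactly as in the Proposition above. Tensoring, $B\widehat\otimes I$ is a continuous operator on both $\mathcal S(\mathbb R^2)$ and $\mathcal S'(\mathbb R^2)$ satisfying $(B\widehat\otimes I)(A\widehat\otimes I)=I$, so that $(A\widehat\otimes I)U\in\mathcal S(\mathbb R^2)$ implies $U=(B\widehat\otimes I)(A\widehat\otimes I)U\in\mathcal S(\mathbb R^2)$, proving regularity.

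\emph{Main obstacle.} The delicate point is the continuity of $P$ and of $A^{-1}$ on $A\bigl(\mathcal S'(\mathbb R)\bigr)$: strictly, this requires the direct sums in (ii)--(iii) to be topological (so that $A(\mathcal S'(\mathbb R))$ is closed), after which an open mapping or closed graph argument in the nuclear space $\mathcal S'(\mathbb R)$ provides the needed continuity. One then has to check that $B\widehat\otimes I$, defined a priori on decomposable tensors, extends continuously to the whole completed nuclear tensor product and is indeed a left inverse of $A\widehat\otimes I$ there. These topological subtleties, together with verifying that the partial integration used in the converse direction really maps Schwartz kernels to Schwartz functions in a single variable, are the only non-routine ingredients.
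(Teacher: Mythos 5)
Your converse direction is sound and is essentially the paper's argument (the paper exhibits $\phi\otimes\delta$ in the kernel of $A\widehat\otimes I$; your tensoring with a non-Schwartz $w$ and pairing in one variable is an equivalent variant, and it even gives injectivity on all of $\mathcal S'(\mathbb R)$ directly). The genuine gap is in the forward direction, at exactly the point you yourself label the ``main obstacle'': the continuity of $B=A^{-1}\circ P$ on $\mathcal S'(\mathbb R)$ and of its restriction to $\mathcal S(\mathbb R)$. The algebraic part of your construction is fine ($BA=I$ and $B\bigl(\mathcal S(\mathbb R)\bigr)\subset\mathcal S(\mathbb R)$ do follow from (i)--(iii) and injectivity), but the claim that continuity follows ``from the closed graph theorem exactly as in the Proposition above'' does not stand: that Proposition deduces continuity of $A_{\vert\mathcal S}$ \emph{from} the assumed continuity of $A$ on $\mathcal S'(\mathbb R)$, whereas for $B$ you know continuity on neither space; moreover $\mathcal S'(\mathbb R)$ is not metrizable, so the classical closed graph/open mapping theorems do not apply and one must invoke the Pt\'ak--barrelled versions. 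Since hypotheses (ii)--(iii) are stated as algebraic direct sums, you are not entitled to assume that $A\bigl(\mathcal S'(\mathbb R)\bigr)$ is closed or that the projection $P$ along $\mathcal N(A^\ast)$ is continuous; deferring this as a ``topological subtlety'' leaves unproved precisely the analytic content of the implication.

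The paper closes this point without ever constructing a projection or a left inverse. Using (ii) and the open mapping theorem in the Fr\'echet setting, $A_{\vert\mathcal S}$ is an isomorphism of $\mathcal S(\mathbb R)$ onto $A\bigl(\mathcal S(\mathbb R)\bigr)$; using (iii), $A\bigl(\mathcal S'(\mathbb R)\bigr)$ is canonically isomorphic to $\mathcal S'(\mathbb R)/\mathcal N(A^\ast)$, hence barrelled, and since $\mathcal S'(\mathbb R)$ is a Pt\'ak space the open mapping theorem of Horv\'ath (Prop.\ 3.17.2) makes $A$ an isomorphism of $\mathcal S'(\mathbb R)$ onto $A\bigl(\mathcal S'(\mathbb R)\bigr)$. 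These isomorphisms are then tensored with the identity (Treves, Prop.\ 43.7 and 43.9) to produce the two splittings \eqref{eqn:6} and \eqref{eqn:7}, and the conclusion follows by comparing the unique decompositions of $f=(A\widehat\otimes I)u$ in \eqref{eqn:6} and \eqref{eqn:7}, using only injectivity of $A\widehat\otimes I$ on its image. If you wish to keep your left-inverse formulation, you must import the same ingredients — the Pt\'ak-space open mapping theorem to make $A^{-1}$ continuous on $A\bigl(\mathcal S'(\mathbb R)\bigr)$ — and in addition prove continuity of the projection $P$ along $\mathcal N(A^\ast)$, an extra burden that the paper's route avoids entirely; you would also still need the (nontrivial) identification of $\bigl(A\bigl(\mathcal S(\mathbb R)\bigr)\oplus\mathcal N(A^\ast)\bigr)\widehat\otimes\mathcal S(\mathbb R)$ type objects that underlies the extension of $B\widehat\otimes I$ and the identity $(B\widehat\otimes I)(A\widehat\otimes I)=I$ on the completed tensor product.
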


\begin{remark}
 We say that the operator $A\widehat \otimes I$ is obtained by ``extension of the variables from $\mathbb R$ to $\mathbb R^2$''. This explains the title of this section.
\end{remark}

\begin{proof}
Assume $A\widehat \otimes I$ is regular.

Consider $u\in \mathcal S'(\mathbb R)$ such that $Au\in\mathcal
S(\mathbb R)$. Then $A\widehat \otimes I (u\otimes v)=A u\otimes
v\in \mathcal S(\mathbb R^2)$ for all $v\in \mathcal S(\mathbb R)$.
If $A\widehat\otimes I$ is regular, $u\otimes v$ must
belong to $\mathcal S(\mathbb R^2)$ for all $v\in \mathcal S(\mathbb
R)$. But this is impossible, unless $u$ belongs to $\mathcal
S(\mathbb R)$.

Assume now there exists $\phi\in\mathcal S(\mathbb R)\setminus
\{0\}$ such that $A\phi=0$. Let $\delta$ be the Dirac distribution,
then $\phi(x)\otimes \delta(y)$ belongs to the kernel of
$A\widehat\otimes I$, but not to $\mathcal S(\mathbb R^2)$, in
contradiction with the regularity of $A\widehat\otimes I$.

Assume now that $A$ is regular and one-to-one.
By assumption \eqref{itm:3} and Open Mapping Theorem $A_{\vert\mathcal S}$ is an isomorphism of $\mathcal S(\mathbb R)$ onto $A\bigl(\mathcal S(\mathbb R)\bigr)$. Then, by Propositions 43.7 and 43.9 of \cite{Treves}, $A_{\vert \mathcal S}\widehat \otimes I_{\vert \mathcal S}$ is an isomorphism of $\mathcal S(\mathbb R^2)$ onto
\begin{equation*}
A_{\vert \mathcal S}\widehat \otimes I_{\vert \mathcal S}(\mathcal S(\mathbb R^2))=A\bigl(\mathcal S(\mathbb R)\bigr)\widehat \otimes\mathcal S(\mathbb R).
\end{equation*}
 Since
\begin{equation*}
A_{\vert \mathcal S}\widehat \otimes I_{\vert \mathcal S}=(A\widehat \otimes I)_{\vert\mathcal S},
\end{equation*}
we have that $(A\widehat \otimes I)_{\vert\mathcal S}$ is an isomorphism of  $\mathcal S(\mathbb R^2)$ onto $A\bigl(\mathcal S(\mathbb R)\bigr)\widehat \otimes\mathcal S(\mathbb R)$.

Moreover, thanks to  hypothesis \eqref{itm:3}, we have:
\begin{equation}
\label{eqn:6}
\mathcal S(\mathbb R^2)=\Bigl(A\bigl(\mathcal S(\mathbb R)\bigl)\oplus \mathcal N(A^\ast)\Bigr)\widehat \otimes\mathcal S(\mathbb R)=
A \widehat \otimes I \bigl(\mathcal S(\mathbb R^2)\bigr)\oplus \mathcal N(A^\ast)\widehat\otimes\mathcal S(\mathbb R).
\end{equation}

By Proposition 3.17.2 of \cite{Horvath}, the Open Mapping Theorem is
true for continuous  linear maps from a Pt\'ak space onto a
barrelled Hausdorff space. Now by Proposition 3.17.6 of
\cite{Horvath}, $\mathcal S'(\mathbb R)$ is a Pt\'ak space. On the
other hand, from hypothesis \eqref{itm:4} we have that $A(\mathcal
S'\bigl(\mathbb R)\bigr)$ is canonically isomorphic to $\mathcal
S'(\mathbb R)/\mathcal N(A^\ast)$. Then, by Corollary (a) to
Proposition 3.6.4 of \cite{Horvath}, we have that $A\bigl(\mathcal
S'(\mathbb R)\bigr)$ is a barrelled Hausdorff space. Then, by Open
Mapping Theorem, $A$ is an isomorphism of $\mathcal S'(\mathbb R)$
onto $A\bigl(\mathcal S'(\mathbb R)\bigr)$.

Therefore from Propositions 43.7 and 43.9 of \cite{Treves}
 we obtain that $A\widehat\otimes I$ is an isomorphism of $\mathcal S'(\mathbb R^2)$ onto a dense subspace of  $A\bigl(\mathcal S'(\mathbb R)\bigr)\widehat\otimes\mathcal S'(\mathbb R)$.  Since  $\mathcal S'(\mathbb R^2)$ is complete, we have
 \begin{equation*}
 A\widehat\otimes I\bigl(\mathcal S'(\mathbb R^2)\bigr)=A\bigl(\mathcal S'(\mathbb R)\bigr)\widehat\otimes\mathcal S'(\mathbb R).
\end{equation*}

Moreover, since $\mathcal S'(\mathbb R)=A\bigl(\mathcal S'(\mathbb R)\bigr)\oplus \mathcal N(A^\ast)$, we obtain
\begin{equation}
\label{eqn:7}
\mathcal S'(\mathbb R^2)=
A\bigl(\mathcal S'(\mathbb R)\bigr)\widehat\otimes\mathcal S'(\mathbb R)\oplus \mathcal N(A^\ast)\otimes\mathcal S'(\mathbb R)=
A\widehat\otimes I(\mathcal S'(\mathbb R^2))\oplus \mathcal N(A^\ast)\widehat\otimes\mathcal S'(\mathbb R).
\end{equation}

Consider now $u\in\mathcal S'(\mathbb R^2)$ such that $f=(A\widehat \otimes I) u\in\mathcal S(\mathbb R^2)$. Since $f$ belongs to $\mathcal S(\mathbb R^2)$, thanks to \eqref{eqn:6}, there exist unique $v\in \mathcal S(\mathbb R^2)$ and $h\in \mathcal N(A^\ast)\widehat\otimes\mathcal S(\mathbb R)\subset \mathcal N(A^\ast)\widehat\otimes \mathcal S'(\mathbb R)$ such that
$(A\widehat\otimes I)u=(A\widehat\otimes I)v+h$. But then, \eqref{eqn:7} implies that $h=0$ and $u=v\in\mathcal S(\mathbb R^2)$.
\end{proof}

\section{Regularity of a class of differential operators.}\label{sec:1}

\begin{definition}
A polynomial  $a(x,\xi)$ on $\mathbb R\times\mathbb R$
is \emph{hypo-elliptic} if it does not vanish outside a compact set and
\begin{equation*}
\lim_{\abs{x}+\abs{\xi}\to\infty} \frac {\bigabs{\partial_x a(x,\xi)}+\bigabs{\partial_\xi a(x,\xi)}}{\bigabs{a(x,\xi)}}=0.
\end{equation*}
\end{definition}

\begin{theorem}\label{thm:4}
A differential operator  $A:\mathcal S'(\mathbb R)\to \mathcal S'(\mathbb R)$ with polynomial hypo-elliptic symbol
is regular and satisfies the hypotheses of Theorem \ref{thm:9}.
\end{theorem}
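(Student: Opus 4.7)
My plan is to invoke Shubin's global calculus of pseudodifferential operators with polynomial-type symbols. Since $a(x,\xi)$ is polynomial and hypo-elliptic in the above sense, $A$ belongs to Shubin's hypo-elliptic class and admits a two-sided parametrix $B$ in the same calculus, i.e.
\begin{equation*}
 BA=I-R_1,\qquad AB=I-R_2,
\end{equation*}
where the remainders $R_1,R_2$ are globally smoothing and hence map $\mathcal S'(\mathbb R)$ continuously into $\mathcal S(\mathbb R)$. It follows that $u=BAu+R_1u\in\mathcal S(\mathbb R)$ whenever $Au\in\mathcal S(\mathbb R)$, which is the regularity of $A$; moreover $A$ acts continuously both on $\mathcal S(\mathbb R)$ and on $\mathcal S'(\mathbb R)$, so the background hypotheses of Theorem~\ref{thm:9} are in force.

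For condition (i), observe that the formal transpose $A^t$ has polynomial symbol agreeing with $a(x,\xi)$ modulo strictly lower-order terms and hence is itself hypo-elliptic, so the preceding regularity argument applies to $A^t$. If $\psi\in\mathcal N(A^\ast)\subset L^2(\mathbb R)$, testing $(A\phi\,|\,\psi)_{L^2}=0$ against $\phi\in\mathcal S(\mathbb R)$ yields $A^t\overline\psi=0$ in $\mathcal S'(\mathbb R)$, whence $\overline\psi$, and so $\psi$, lies in $\mathcal S(\mathbb R)$. For (ii), the parametrix identities imply that $R_1$ is compact on every Shubin--Sobolev space $Q^s(\mathbb R)$, so $A$ is Fredholm from $Q^{s+m}(\mathbb R)$ onto a closed subspace of $Q^s(\mathbb R)$ of codimension $\dim\mathcal N(A^\ast)$, and both $\mathcal N(A)$ and $\mathcal N(A^\ast)$ are $s$-independent and contained in $\mathcal S(\mathbb R)$. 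Picking an $L^2$-orthonormal basis $\psi_1,\dots,\psi_N$ of $\mathcal N(A^\ast)$, the finite-rank projection $Pf=\sum_i(f\,|\,\psi_i)_{L^2}\psi_i$ preserves $\mathcal S(\mathbb R)$; for $f\in\mathcal S(\mathbb R)$ the residue $f-Pf$ is $L^2$-orthogonal to $\mathcal N(A^\ast)$ and hence equals $Av$ for some $v\in\bigcap_s Q^{s+m}(\mathbb R)=\mathcal S(\mathbb R)$ by regularity of $A$. This delivers $\mathcal S(\mathbb R)=A\bigl(\mathcal S(\mathbb R)\bigr)\oplus\mathcal N(A^\ast)$.

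For (iii) the same projection extends to $\mathcal S'(\mathbb R)$ via duality by $Pu=\sum_i\langle u,\overline{\psi_i}\rangle\,\psi_i\in\mathcal N(A^\ast)$. Each $u\in\mathcal S'(\mathbb R)$ lies in some $Q^{-s}(\mathbb R)$; applying the Fredholm alternative on that space to $u-Pu$ (which annihilates $\mathcal N(A^\ast)$) produces $v\in Q^{-s+m}(\mathbb R)\subset\mathcal S'(\mathbb R)$ with $Av=u-Pu$, so $u-Pu\in A\bigl(\mathcal S'(\mathbb R)\bigr)$. The sum is direct because every element of $A\bigl(\mathcal S'(\mathbb R)\bigr)$ annihilates $\mathcal N(A^\ast)$ in the $\mathcal S'$--$\mathcal S$ pairing. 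The main obstacle I anticipate is precisely the transfer of the Hilbert-space Fredholm theory on the $Q^s$ scale to direct-sum decompositions on the non-normable spaces $\mathcal S(\mathbb R)$ and $\mathcal S'(\mathbb R)$; this requires the globality of the parametrix, the compactness of the remainders on every $Q^s$, and the $s$-independence of the cokernel, all of which are built into the Shubin calculus.
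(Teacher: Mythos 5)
Everything after your first sentence is, in substance, an unpacking of Shubin's Theorem 25.3 (parametrix with globally smoothing remainders, Fredholm property on the scale of Shubin--Sobolev spaces, kernel and cokernel contained in $\mathcal S(\mathbb R)$, and the induced splittings of $\mathcal S$ and $\mathcal S'$), which is exactly what the paper cites; that part is fine modulo standard facts. The genuine gap is the first sentence itself: from the paper's definition of a hypo-elliptic polynomial you cannot directly conclude that $A$ ``belongs to Shubin's hypo-elliptic class and admits a two-sided parametrix in the same calculus''. The paper's definition is purely qualitative: $a$ does not vanish outside a compact set and $\bigl(\abs{\partial_x a}+\abs{\partial_\xi a}\bigr)/\abs{a}\to 0$ at infinity, with no rate and with control of first-order derivatives only. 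Shubin's Definition 25.2 is quantitative: it requires a polynomial lower bound $\abs{a(x,\xi)}\ge C\,\abs{(x,\xi)}^{m_0}$ at infinity and estimates $\abs{\partial^\alpha a}\le C_\alpha\,\abs{a}\,(1+\abs{x}+\abs{\xi})^{-\rho\abs{\alpha}}$ for \emph{all} $\alpha$ with one fixed $\rho>0$, and it is precisely this fixed positive $\rho$ that makes the asymptotic parametrix construction converge in the calculus and yields the smoothing remainders and the $s$-uniform Fredholm theory you then use.

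The bridge from the qualitative condition to the quantitative one is the actual content of the paper's proof: because $a$ is a polynomial, functions such as $r\mapsto\inf_{\abs{z}=r}\abs{a(z)}$ and $r\mapsto\sup_{\abs{z}=r}\abs{\nabla a(z)}/\abs{a(z)}$ are semialgebraic, so by the Tarski--Seidenberg theorem (Appendix A of H\"ormander, vol.~II) they have power-law behaviour at infinity; this upgrades ``$\to 0$'' to ``$\le C r^{-\rho}$'' and ``nonvanishing'' to ``$\ge C r^{m_0}$'', and likewise gives the estimates for derivatives of all orders, i.e.\ membership in Shubin's class in the sense of Definition 25.2. Without this step (or some substitute) your opening claim is unsupported: for a general smooth symbol the ratio could tend to zero more slowly than any power and the infimum could decay faster than any power, in which case no parametrix gaining a fixed power of decay exists and the whole $Q^s$-based argument collapses. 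Supply this reduction and your proof coincides with the paper's, with the citation of Theorem 25.3 replaced by its proof sketch.
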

\begin{proof}
By Tarski-Seidenberg Theorem, see Appendix A of \cite{Hormander}, we have that $a$ is hypo-elliptic in the sense of Definition 25.2 of \cite{Shubin}. Then the result follows from Theorem 25.3 of  \cite{Shubin}.
\end{proof}
\begin{theorem}\label{thm:5}
Consider a  differential operator  $A:\mathcal S'(\mathbb R)\to \mathcal S'(\mathbb R)$ with polynomial hypo-elliptic symbol.
Then $A\widehat\otimes I$ is regular if and only if $A$ is one-to-one.
\end{theorem}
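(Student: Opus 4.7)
The plan is to reduce this statement directly to a combination of Theorems \ref{thm:9} and \ref{thm:4}. Theorem \ref{thm:4} asserts that any differential operator $A$ with polynomial hypo-elliptic symbol is both regular and satisfies all three hypotheses of Theorem \ref{thm:9}, so $A$ fits automatically into the framework where the biconditional of Theorem \ref{thm:9} applies.

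Concretely, I would argue as follows. First, invoking Theorem \ref{thm:4} I obtain at once that $A$ is continuous on $\mathcal S'(\mathbb R)$, maps $\mathcal S(\mathbb R)$ into $\mathcal S(\mathbb R)$, is regular, has $\mathcal N(A^\ast)\subset \mathcal S(\mathbb R)$, and gives the direct sum decompositions \mbox{$\mathcal S(\mathbb R)=A(\mathcal S(\mathbb R))\oplus \mathcal N(A^\ast)$} and \mbox{$\mathcal S'(\mathbb R)=A(\mathcal S'(\mathbb R))\oplus \mathcal N(A^\ast)$}. So the hypotheses of Theorem \ref{thm:9} are in force for $A$.

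Next, Theorem \ref{thm:9} yields the equivalence
\begin{equation*}
A \text{ is regular and one-to-one} \iff A\widehat\otimes I \text{ is regular}.
\end{equation*}
Since regularity of $A$ is already guaranteed by Theorem \ref{thm:4}, the first condition collapses to injectivity of $A$, which gives exactly the statement of Theorem \ref{thm:5}. Both implications are thus obtained simultaneously: from $A\widehat\otimes I$ regular one reads off that $A$ must be one-to-one, and from $A$ one-to-one (combined with the already-granted regularity) one gets regularity of $A\widehat\otimes I$.

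There is essentially no obstacle here, since the two ingredient theorems have already done all the work; the only subtlety worth flagging is the verification that the differential operator under consideration really does satisfy the continuity and invariance properties (continuity on $\mathcal S'(\mathbb R)$ and $A(\mathcal S(\mathbb R))\subset \mathcal S(\mathbb R)$) required to even state Theorem \ref{thm:9}, but this is immediate for any differential operator with polynomial coefficients.
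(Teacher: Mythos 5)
Your proposal is correct and follows exactly the paper's route: the paper proves Theorem \ref{thm:5} by citing Theorems \ref{thm:9} and \ref{thm:4}, and your elaboration (Theorem \ref{thm:4} supplies regularity and the hypotheses of Theorem \ref{thm:9}, so the biconditional collapses to injectivity of $A$) is precisely the intended argument, just spelled out in more detail.
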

\begin{proof}
It follows from Theorems \ref{thm:9} and \ref{thm:4}.
\end{proof}

\begin{theorem}\label{thm:6}
Consider the differential operator on $\mathcal S'(\mathbb R^2)$
\begin{equation}\label{eqn:10}
B=\sum_{j+k\le m}c_{j,k} (x-q D_y)^j \bigl(y+p D_x\bigr)^k,
\end{equation}
where $p$ is a  real number and
\begin{equation*}
q=1-p.
\end{equation*}

Let $A$ be the differential operator on $\mathcal S'(\mathbb R)$ with symbol
\begin{equation*}
a(x,\xi)=\sum_{j+k\le m}c_{j,k}  x^j \xi^k.
\end{equation*}
 Assume the symbol $a$ be hypo-elliptic. Then $B$ is  regular if and only if $A$ is one-to-one.
\end{theorem}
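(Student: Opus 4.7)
The plan is to reduce the statement to Theorem \ref{thm:5} by exhibiting an explicit topological automorphism $U$ of $\mathcal S(\mathbb R^2)$ which extends to $\mathcal S'(\mathbb R^2)$ and intertwines $A\widehat\otimes I$ with $B$. Once this is in hand, $B$ is regular on $\mathcal S'(\mathbb R^2)$ if and only if $A\widehat\otimes I$ is regular, and by Theorem \ref{thm:5} this is equivalent to $A$ being one-to-one (the hypo-ellipticity of $a$ being provided).

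The natural candidate is the Wigner-like transform
\begin{equation*}
Uu(x,\xi)=\int e^{-iy\xi}\,u(x+qy,\,x-py)\,dy,\qquad u\in\mathcal S(\mathbb R^2),
\end{equation*}
which is the composition of the linear change of variables $(x_1,x_2)\mapsto(x+qy,x-py)$ (whose Jacobian determinant is $-(p+q)=-1\ne 0$) with the partial Fourier transform $\mathcal F_{y\to\xi}$. Both factors are topological automorphisms of $\mathcal S(\mathbb R^2)$ and extend to topological automorphisms of $\mathcal S'(\mathbb R^2)$, so the same holds for $U$. A direct integration by parts in $y$ then produces the two basic intertwining identities
\begin{equation*}
U(x_1 u)=(x-qD_\xi)\,Uu,\qquad U(D_{x_1}u)=(\xi+pD_x)\,Uu,
\end{equation*}
where $x_1$ and $D_{x_1}$ denote multiplication by and differentiation with respect to the first variable of $u$. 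Iterating and summing, and relabelling $\xi$ as $y$ in the image, gives $U\circ(A\widehat\otimes I)=B\circ U$ on $\mathcal S(\mathbb R^2)$, and this identity extends to $\mathcal S'(\mathbb R^2)$ by density and continuity.

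The main point requiring care is exactly the verification of this intertwining: one must track the ordering of the non-commuting factors $x^j$ and $\xi^k$ in the symbol of $A$ against $(x-qD_y)^j$ and $(y+pD_x)^k$ in $B$, and check that the ``position to the left of momentum'' convention used in both definitions makes the iteration of the two basic relations above produce exactly $B$. Once this is done, the identity $U\circ(A\widehat\otimes I)=B\circ U$ together with the fact that $U$ is an automorphism of $\mathcal S(\mathbb R^2)$ and of $\mathcal S'(\mathbb R^2)$ transfers regularity from one operator to the other, and the proof is completed by invoking Theorem \ref{thm:5}.
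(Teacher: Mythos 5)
Your proposal is correct and follows essentially the same route as the paper: the paper also conjugates $B$ to $A\widehat\otimes I$ via the Wigner-like transform $\Wig_p[f](x,y)=\tfrac1{(2\pi)^{\nu/2}}\int e^{-izy}f(x+qz,x-pz)\,dz$, establishes exactly your two basic relations $\Wig_p[M_1w]=(M_1-qD_2)\Wig_p[w]$ and $\Wig_p[D_1w]=(M_2+pD_1)\Wig_p[w]$, iterates them (respecting the $x^jD^k$ ordering) to get $B\Wig_p[w]=\Wig_p\bigl((A\widehat\otimes I)w\bigr)$, and then invokes Theorem \ref{thm:5}.
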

\begin{proof}
Introduce the Wigner-like transform
 of a function $f\in\mathcal S(\mathbb R^2)$:
\begin{equation*}
\Wig_p[f](x,y)=
\frac 1{(2\pi)^{\nu/2}}\int e^{-i z y}f\bigl(x+(1-p)z,x-p z\bigr)\, d z.
\end{equation*}

Since $\Wig_p$ an isomorphism both on $\mathcal S(\mathbb R^2)$  and
on $\mathcal S'(\mathbb R^2)$, the result follows from Theorem \ref{thm:5} and the following identity.
\begin{equation}
\label{eqn:8}
B\Wig_p[w]=\Wig_p\bigl((A\widehat\otimes I) w\bigr),\qquad w\in \mathcal S(\mathbb R^2).
\end{equation}
We prove \eqref{eqn:8}, by induction. This means that  we may assume $m=1$.

Define the operators:
\begin{equation*}
M_1 w(x,y)= xw(x,y),\qquad M_2w(x,y)=yw(x,y),
\end{equation*}
and
\begin{equation*}
D_1 w(x,y)= D_xw(x,y),\qquad D_2w(x,y)=D_yw(x,y),
\end{equation*}
where, as usual,
\begin{equation*}
D_x=-i\partial_x,\qquad D_y=-i\partial_y.
\end{equation*}
Then we have
\begin{align}
& \label{eqn:1}\begin{aligned}[t]
&D_1\Wig_p[w](x,y)=\frac 1{(2\pi)^{\nu/2}}\int e^{-iy z} D_x\bigl(w(x+q z,x-p z)\bigr)\,d z
\\
&\qquad = \Wig_p[D_1 w](x,y)+\Wig_p[D_2 w](x,y),
\end{aligned}
\\ &\label{eqn:2}
\begin{aligned}[t]
&D_2\Wig_p[w](x,y)=\frac 1{(2\pi)^{\nu/2}}\int -z_j e^{-iy z} w(x+qz,x-pz)\,d z
\\
&\qquad=\frac 1{(2\pi)^{\nu/2}}\int  e^{-iy z} \bigl(x-p z\bigr)
w(x+qz,x-pz)\,d z
\\
&\qquad\quad-\frac 1{(2\pi)^{\nu/2}}\int  e^{-iy z} \bigl(x+qz\bigr)
w(x+qz,x-pz)\,d z
\\
 &\qquad=\Wig_p[M_2w](x,y)-\Wig_p[M_1w](x,y),
\end{aligned}
\\ &\label{eqn:3}
\begin{aligned}[t]
&M_1\Wig_p[w](x,y)=\frac 1{(2\pi)^{\nu/2}}\int e^{-iy z} x w(x+qz,x-pz)\,d z
\\
&\qquad=\frac 1{(2\pi)^{\nu/2}}\int  e^{-iy z} qx
w(x+qz,x-pz)\,d z
\\
&\qquad\quad+\frac 1{(2\pi)^{\nu/2}}\int  e^{-iy z} px
w(x+qz,x-pz)\,d z
\\
&\qquad=\frac 1{(2\pi)^{\nu/2}}\int  e^{-iy z} q(x-pz)
w(x+qz,x-pz)\,d z
\\
&\qquad\quad+\frac 1{(2\pi)^{\nu/2}}\int  e^{-iy z} p(x+qz)
w(x+qz,x-pz)\,d z
\\
&\qquad=  q\Wig_p[M_2 w](x,y)+p\Wig_p[M_1 w](x,y),
\end{aligned}
\\ &\label{eqn:4}
\begin{aligned}[t]
&M_2\Wig_p[w](x,y)=\frac 1{(2\pi)^{\nu/2}}\int \Bigl(-D_ze^{-iy z}\Bigr) w(x+qz,x-pz)\,d z
\\
&\qquad=\frac 1{(2\pi)^{\nu/2}} q\int  e^{-iy z}
(D_1 w)(x+q z,x-p z)\,d z
\\
&\qquad\quad
-\frac 1{(2\pi)^{\nu/2}} p\int  e^{-iy z}
(D_2 w)(x+q z,x-p z)\,d z
\\
&\qquad=  q\Wig_p[D_1 w](x,y)-p\Wig_p[D_2 w](x,y).
\end{aligned}
\end{align}
Then from \eqref{eqn:1} and \eqref{eqn:4} we obtain:
\begin{equation*}
\Wig_p[D_1 w]=
(M_2+pD_1) \Wig_p[w],
\end{equation*}
and from \eqref{eqn:2} and \eqref{eqn:3} we obtain:
\begin{equation*}
\Wig_p[M_1 w]
= (M_1-qD_2)\Wig_p[w]. \qedhere
\end{equation*}
\end{proof}

Observe that the symbol of the operator  \eqref{eqn:10} cannot be hypo-elliptic is the sense of the Definition 25.2 of \cite{Shubin}.
In fact we have the following proposition.

\begin{proposition}\label{pro:18}
Let $b(x,y;\xi,\eta)$ be the symbol of the operator
\eqref{eqn:10}, and
\begin{equation*}
\tilde{a}(x,\xi) = \sum_{j+k\leq m}\, \sum_{n\leq
\max\{j,k\}} c_{j,k} (iq)^n n!
\binom{j}{n} \binom{k}{n} x^{j-n}
\xi^{k-n}.
\end{equation*}
Then for all $(x_0,\xi_0)\in \mathbb R\times\mathbb R$, we have that
\begin{equation*}
b(x_0+q\eta,y_0-p\xi;\xi,\eta)=\tilde{a}
(x_0,y_0),\qquad (\xi,\eta)\in\mathbb R\times\mathbb R.
\end{equation*}
\end{proposition}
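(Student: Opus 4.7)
The plan is to compute the left (Kohn--Nirenberg) symbol $b(x,y;\xi,\eta)$ of $B$ by expanding each summand $(x-qD_y)^j(y+pD_x)^k$ with the standard composition formula for left symbols of differential operators, and then to substitute $x=x_0+q\eta$, $y=y_0-p\xi$ at the very end in order to kill the $(\xi,\eta)$-dependence. By linearity in the coefficients $c_{j,k}$ it suffices to handle a single monomial.

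Set $X=x-qD_y$ and $Y=y+pD_x$, so that $\sigma(X)(x,y;\xi,\eta)=x-q\eta$ and $\sigma(Y)(x,y;\xi,\eta)=y+p\xi$. Because $\sigma(X)$ is independent of $(\xi,y)$ and $\sigma(Y)$ is independent of $(x,\eta)$, a short induction using the composition formula shows that no lower-order corrections arise when raising to powers, hence $\sigma(X^j)=(x-q\eta)^j$ and $\sigma(Y^k)=(y+p\xi)^k$ exactly.

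Next I apply the composition rule
\begin{equation*}
\sigma(X^j Y^k)=\sum_{\alpha_1,\alpha_2\ge 0}\frac{1}{\alpha_1!\,\alpha_2!}\bigl(\partial_\xi^{\alpha_1}\partial_\eta^{\alpha_2}\sigma(X^j)\bigr)\bigl(D_x^{\alpha_1}D_y^{\alpha_2}\sigma(Y^k)\bigr).
\end{equation*}
Since $\sigma(X^j)=(x-q\eta)^j$ carries no $\xi$-dependence and $\sigma(Y^k)=(y+p\xi)^k$ no $x$-dependence, only the terms with $\alpha_1=0$ survive. Using $\partial_\eta^n(x-q\eta)^j=(-q)^n\frac{j!}{(j-n)!}(x-q\eta)^{j-n}$ and $D_y^n(y+p\xi)^k=(-i)^n\frac{k!}{(k-n)!}(y+p\xi)^{k-n}$, combining $(-q)^n(-i)^n=(iq)^n$, and recognising $\frac{j!\,k!}{n!(j-n)!(k-n)!}=n!\binom{j}{n}\binom{k}{n}$, I obtain
\begin{equation*}
\sigma(X^j Y^k)(x,y;\xi,\eta)=\sum_{n\ge 0}(iq)^n\, n!\binom{j}{n}\binom{k}{n}(x-q\eta)^{j-n}(y+p\xi)^{k-n}.
\end{equation*}

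Finally the substitution $x=x_0+q\eta$, $y=y_0-p\xi$ turns $x-q\eta$ into $x_0$ and $y+p\xi$ into $y_0$, so the resulting expression depends only on $(x_0,y_0)$; multiplying by $c_{j,k}$ and summing over $j+k\le m$ recovers $\tilde a(x_0,y_0)$. The only delicate point is the bookkeeping: one must use the ``separated-variable'' structure of $\sigma(X)$ and $\sigma(Y)$ to exclude all cross-contractions except the $\partial_\eta$--$D_y$ one, and one must track the signs so that $(-q)^n(-i)^n$ matches the $(iq)^n$ appearing in $\tilde a$. Everything else is a routine combinatorial identification.
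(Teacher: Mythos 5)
Your proposal is correct and follows essentially the same route as the paper: both compute the left symbol of $(x-qD_y)^j(y+pD_x)^k$ via the standard composition formula (Theorem 3.4 of Shubin), note that only the $\partial_\eta$--$D_y$ contractions survive, and arrive at $\sum_n (iq)^n n!\binom{j}{n}\binom{k}{n}(x-q\eta)^{j-n}(y+p\xi)^{k-n}$, after which the substitution $x=x_0+q\eta$, $y=y_0-p\xi$ is immediate. Your extra justification that $\sigma(X^j)=(x-q\eta)^j$ and $\sigma(Y^k)=(y+p\xi)^k$ exactly is a detail the paper leaves implicit, but the argument is the same.
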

\begin{proof}
By Theorem 3.4 of
\cite{Shubin} we have that the symbol of the operator $B$
is given by
\begin{align*}
b(x,y;\xi,\eta) &=\sum_{j+k\leq m} c_{j,k}
\sum_{n\in\mathbb{Z}_+} \frac{(-i)^n}{n!}
\partial^n_\eta (x-q\eta)^j \partial^n_y
(y+p\xi)^k \\
&= \sum_{j+k\leq m} c_{j,k} \sum_{n\leq \max\{j,k\}}
(iq)^n  \binom{j}{n}
\binom{k}{n}n! (x-\eta+p\eta)^{j-n}
(y+p\xi)^{k-n}. \qedhere
\end{align*}
\end{proof}

We end this section with a simple observation which widens  the range of applicability of the Theorem \ref{thm:6}.

\begin{proposition}\label{pro:15}
Given a differential operator $B=b(z,D)$ on $\mathbb R^2$, with polynomial coefficients,
consider a $2\times 2$ non-singular real matrix $T$ and set
\begin{equation*}
B_T=a_T(z,D)=a(T' z,T^{-1}D).
\end{equation*}
where $T'$ is the transposed matrix. Then $B$ is regular if and only if $B_T$ is regular.
\end{proposition}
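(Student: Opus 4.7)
The plan is to exhibit $B$ and $B_T$ as conjugate operators via a linear change of variables in $\mathbb R^2$. Introduce the pullback
\begin{equation*}
U_T u(z) = u\bigl((T')^{-1} z\bigr),
\end{equation*}
defined first on $\mathcal S(\mathbb R^2)$ and then extended, by duality, to $\mathcal S'(\mathbb R^2)$. Since $T$ is non-singular, $U_T$ is a topological isomorphism both of $\mathcal S(\mathbb R^2)$ and of $\mathcal S'(\mathbb R^2)$, with inverse $U_{T^{-1}}$ acting as $u\mapsto u(T'\,\cdot\,)$.

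First I would verify the intertwining at the level of generators. A direct chain-rule computation gives
\begin{equation*}
U_T^{-1}\, M_j\, U_T = (T'z)_j, \qquad U_T^{-1}\, D_j\, U_T = (T^{-1}D)_j,
\end{equation*}
where $M_j$ denotes multiplication by $z_j$. Since conjugation by $U_T$ is a ring automorphism of the algebra of differential operators with polynomial coefficients, these two identities propagate from the generators to any polynomial expression in $z$ and $D$, yielding
\begin{equation*}
U_T^{-1}\, B\, U_T = b\bigl(T'z,\, T^{-1}D\bigr) = B_T.
\end{equation*}

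With this intertwining in hand, regularity transfers at once. Suppose $B$ is regular and let $u\in\mathcal S'(\mathbb R^2)$ satisfy $B_T u \in \mathcal S(\mathbb R^2)$. Then $B(U_T u) = U_T (B_T u) \in \mathcal S(\mathbb R^2)$, because $U_T$ preserves $\mathcal S(\mathbb R^2)$; regularity of $B$ forces $U_T u \in \mathcal S(\mathbb R^2)$, whence $u \in \mathcal S(\mathbb R^2)$. The converse implication is strictly symmetric, obtained by the same argument with $U_T^{-1}$ in place of $U_T$, or equivalently by applying the forward direction with $T$ replaced by $T^{-1}$, under which the construction takes $B_T$ back to $B$.

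The argument is essentially mechanical; the only genuine obstacle is the bookkeeping, namely pinning down the right combination of transposes and inverses in the definition of $U_T$ so that the two generator identities come out as the prescribed substitutions $z\mapsto T'z$ and $D\mapsto T^{-1}D$ rather than some other plausible but incorrect pair.
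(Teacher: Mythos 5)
Your argument is correct and coincides with the paper's: the identity $U_T^{-1}BU_T=B_T$ is exactly the paper's relation \eqref{eqn:29}, $B_T(u\circ T')=(Bu)\circ T'$, and the transfer of regularity via the fact that composition with $T'$ preserves $\mathcal S(\mathbb R^2)$ and $\mathcal S'(\mathbb R^2)$ is the same; you merely spell out the "simple computation" through the generator identities. No gap.
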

\begin{proof}
Consider $u\in\mathcal S'(\mathbb R^2)$. A simple computation yields
\begin{equation}
\label{eqn:29}
B_T(u\circ T')=(Bu)\circ T'.
\end{equation}
Since $u\circ T'$ belongs to $\mathcal S(\mathbb R^2)$ if and only if $u\in \mathcal S(\mathbb R^2)$, the result follows immediately from \eqref{eqn:29}.
\end{proof}

\section{Examples.}\label{sec:4}

\subsection{Second order self-adjoint operators.}\label{sec:5}

\begin{proposition}\label{pro:17}
Consider the  polynomial
\begin{equation}\label{eqn:34}
a(x,\xi)=a_2x^2+a_1 x+a_0-ib_1+2(b_1x+b_0)\xi+c_0\xi^2,
\end{equation}
where the coefficients $a_2,a_1,a_0,b_1,b_0,c_0$ are real numbers and $i^2=-1$.

Assume  there exist $r_1,r_0,s_1,s_0$ such that
\begin{equation}\label{eqn:40}
b_1=r_1s_1,\qquad b_0=r_0s_0
\end{equation}
and
\begin{equation}\label{eqn:41}
s_0^2+s_1^2\le c_0,\qquad a_1^2< 4(a_2-r_1^2)(a_0-r_0^2).
\end{equation}

Then the operator
\begin{equation*}
B=a_2 (x-qD_y)^2+a_1(x-qD_y)+a_0-ib_1+2\bigl(b_1(x-qD_y)+b_0\bigr)(y+pD_x)+c_0(y+pD_x)^2
\end{equation*}
is regular, for all $p\in \mathbb R$ and $q=1-p$.
\end{proposition}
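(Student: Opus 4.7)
The plan is to verify the two hypotheses of Theorem~\ref{thm:6}: that $a(x,\xi)$ is a hypo-elliptic polynomial symbol and that the one-variable operator $A=a(x,D_x)$ is one-to-one on $\mathcal S'(\mathbb R)$. Both conclusions will be read off a single algebraic decomposition of $A$ as a sum of squares of formally symmetric operators plus a strictly positive constant.

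With $\alpha=a_2-r_1^2$ and $\delta=\bigl(4(a_2-r_1^2)(a_0-r_0^2)-a_1^2\bigr)/(4\alpha)$, both strictly positive by \eqref{eqn:41}, the identity to establish is
\[
A = (r_1 x + s_1 D_x)^2 + (r_0 + s_0 D_x)^2 + (c_0-s_0^2-s_1^2) D_x^2 + \alpha\Bigl(x+\tfrac{a_1}{2\alpha}\Bigr)^2 + \delta.
\]
Expanding the first two squares, using \eqref{eqn:40} and the commutator $D_x x+xD_x=2xD_x-i$, produces exactly the terms $2b_1 x D_x$, $2b_0 D_x$ and the imaginary constant $-ib_1$ that appear in \eqref{eqn:34}; completing the square in the real polynomial $\alpha x^2+a_1 x+(a_0-r_0^2)$ matches the remainder. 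The same identity at the symbol level, without the commutator contribution, gives $\Re a(x,\xi)\ge\delta$ on all of $\mathbb R^2$.

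Hypo-ellipticity of $a$ then drops out: since $\abs{a}\ge\Re a\ge\delta>0$, $a$ never vanishes, and the principal part $a_2 x^2+2b_1 x\xi+c_0\xi^2$ has determinant $a_2c_0-b_1^2\ge a_2 s_0^2+(a_2-r_1^2)s_1^2\ge 0$, which together with $a_2>0$ and $c_0>0$ makes it positive definite; hence $\abs a$ grows quadratically while $\abs{\partial a}$ is at most linear, which is the required decay of $(\abs{\partial_x a}+\abs{\partial_\xi a})/\abs{a}$ at infinity. For injectivity, note that $r_1 x+s_1 D_x$ and $r_0+s_0 D_x$ are formally symmetric on $\mathcal S(\mathbb R)$, so their squares are non-negative; the other two summands are visibly non-negative, so $(A\phi,\phi)_{L^2}\ge\delta\norm{\phi}_{L^2}^2$ for every $\phi\in\mathcal S(\mathbb R)$. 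If $u\in\mathcal S'(\mathbb R)$ satisfies $Au=0$, Theorem~\ref{thm:4} upgrades $u$ to a Schwartz function, and the inequality then forces $u=0$. Theorem~\ref{thm:6} now concludes that $B$ is regular.

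The main obstacle in this scheme is producing the sum-of-squares identity itself: the constant $-ib_1$ appearing in \eqref{eqn:34} is not an ad-hoc convention but is precisely the commutator contribution of $(r_1 x+s_1 D_x)^2$, and is exactly what makes $A$ formally self-adjoint. Without that term the decomposition would fail to close and the positivity that simultaneously powers hypo-ellipticity and injectivity would be destroyed.
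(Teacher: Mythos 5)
Your proposal is correct and takes essentially the same route as the paper: both verify the two hypotheses of Theorem \ref{thm:6} by getting hypo-ellipticity from positive-definiteness of $a_2x^2+2b_1x\xi+c_0\xi^2$, deriving from \eqref{eqn:40}--\eqref{eqn:41} the lower bound $(Au\,\vert\,u)_{L^2}\ge \delta\norm{u}_{L^2}^2$ on $\mathcal S(\mathbb R)$, and using the regularity of the hypo-elliptic operator $A$ (Theorem \ref{thm:4}) to place null solutions in $\mathcal S(\mathbb R)$ before concluding they vanish. The only difference is packaging: you encode the estimate as an exact operator sum-of-squares identity (whose commutator term accounts for the constant $-ib_1$), whereas the paper obtains the same inequality by completing squares inside the integral $(Au\,\vert\,u)_{L^2}$.
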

\begin{proof}
From \eqref{eqn:40} and \eqref{eqn:41} we obtain that
\begin{equation*}
b_1^2=(r_1s_1)^2<a_2c_0,
\end{equation*}
therefore the quadratic form
\begin{equation*}
a_2x^2+2b_1x\xi+c_0\xi^2
\end{equation*}
is  positive-definite and this implies that the polynomial $a$ is hypo-elliptic.

A simple computation shows that the operator $A=a(x,D)$ is symmetric on $\mathcal S(\mathbb R)$. This implies that $(Au\,\vert\,u)$ is real for all $u\in\mathcal S(\mathbb R)$. Then, by using \eqref{eqn:40} and \eqref{eqn:41},  we have the estimate
\begin{equation}\label{eqn:42}
\begin{split}
(Au\,\vert u)_{L^2}&=\int \Bigl((a_2x^2+a_1x+a_0)\abs{u}^2-ib_1 \abs{u}^2+2(b_1x+b_0)Du\,\overline u+c_0D^2u\,\overline u\Bigr)d x
\\&=\int \Bigl((a_2x^2+a_1x+a_0)\abs{u}^2+(b_1x+b_0)(Du\,\overline u+u\overline {Du})+c_0 \abs{Du}^2\Bigr)d x
\\&\ge\int \Bigl(\bigl((a_2-r_1^2)x^2+a_1x+a_0-r_0^2\bigr)\abs{u}^2+(c_0-s_1^2-s_0^2) \abs{Du}^2\Bigr)d x
\\&\ge\frac {4(a_0-r_0^2)(a_2-r_1^2)-a_1^2}{a_2-r_1^2}\,\norm{u}_{L^2}^2.
\end{split}\end{equation}

Now, if $Au=0$, by regularity  $u$ belongs to $\mathcal S(\mathbb R)$, and estimate \eqref{eqn:42} together with   \eqref{eqn:41}  implies   $u=0$. This shows that $A$ is one-to-one. Then
the result follows from  Theorem \ref{thm:6}.
\end{proof}

In particular, when  $a_2=4$, $c_0=1/4$ $a_1=a_0=b_1=b_0=0$ and $p=q=1/2$, Proposition \ref{pro:17} implies the regularity of
\begin{equation}\label{eqn:44}
B=4 \Bigl(x-\frac 12 D_y\Bigr)^2+\frac 14 \Bigl(y+\frac 12
D_x\Bigr)^2.
\end{equation}
Then   Proposition \ref{pro:15} with
\begin{equation*}
T=\begin{bmatrix} \frac 14 & 0 \\ 0 & 1\end{bmatrix},
\end{equation*}
implies the regularity of the twisted laplacian, already studied in
\cite{Wong}:
\begin{equation}\label{eqn:45}
B_T=\Bigl( D_y-\frac{1}{2}x\Bigr)^2+\Bigl( D_x+\frac{1}{2}y\Bigr)^2
\end{equation}

In the next section we study some generalizations of this operator.

\subsection{Generalized twisted laplacian.}

\begin{proposition}\label{pro:6}
Fix positive integers $h$, $k$, $m$ and $n$, with
\begin{equation}\label{eqn:43}
m<h,\quad n<k, \quad nh+mk\geq hk.
\end{equation}
Let $\lambda,\sigma>0$, $\mu,\nu\ge 0$, $p\in\mathbb{R}$, and write
$q=1-p$. Then the operator
\begin{equation}\label{eqn:9}
\begin{split}
B = &\lambda (x-qD_y)^{2h} + \mu (x-qD_y)^m (y+pD_x)^{2n} (x-qD_y)^m
\\
&+ \nu (y+pD_x)^n (x-qD_y)^{2m} (y+pD_x)^n + \sigma (y+pD_x)^{2k}
\end{split}
\end{equation}
is  regular.
\end{proposition}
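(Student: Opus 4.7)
My plan is to apply Theorem~\ref{thm:6}. Writing $X=x-qD_y$ and $Y=y+pD_x$, a direct computation gives the canonical commutation relation $[X,Y]=i$. First I would put the two mixed summands of $B$ into the canonical form $\sum c_{j,k}X^jY^k$ required by Theorem~\ref{thm:6} by iterated use of $YX=XY-i$. The classical normal-ordering formula
\begin{equation*}
Y^aX^b=\sum_{\ell=0}^{\min(a,b)}(-i)^\ell\binom{a}{\ell}\binom{b}{\ell}\ell!\,X^{b-\ell}Y^{a-\ell}
\end{equation*}
yields $X^mY^{2n}X^m=X^{2m}Y^{2n}+R_1$ and $Y^nX^{2m}Y^n=X^{2m}Y^{2n}+R_2$, where $R_1$ and $R_2$ are linear combinations of diagonal monomials $X^{2m-\ell}Y^{2n-\ell}$ with $\ell\ge 1$. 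Therefore $B=\sum c_{j,k}X^jY^k$, and the associated one-dimensional polynomial symbol in the sense of Theorem~\ref{thm:6} is
\begin{equation*}
a(x,\xi)=\lambda x^{2h}+(\mu+\nu)x^{2m}\xi^{2n}+\sigma\xi^{2k}+r(x,\xi),
\end{equation*}
where the remainder $r(x,\xi)$ is supported on the diagonal points $(2m-\ell,2n-\ell)$, $\ell\ge 1$, all lying strictly inside the rectangle $[0,2m]\times[0,2n]$.

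The next step is to check that $a$ is hypo-elliptic. Since $\lambda,\sigma>0$, $\mu+\nu\ge 0$ and $r$ has total degree strictly less than each of the three leading monomials, there exist constants $c,C>0$ with $\bigabs{a(x,\xi)}\ge c(x^{2h}+\xi^{2k})-C$, so $a$ is nonvanishing outside a compact set. For the derivative quotient, the assumption $nh+mk\ge hk$ is equivalent to $2km+2hn\ge 2hk$, which forces $(2m,2n)$ to lie on or above the segment from $(2h,0)$ to $(0,2k)$, i.e.\ $(2m,2n)$ is an extreme vertex of the Newton polygon of the principal symbol. Testing $(x,\xi)$ along an arbitrary quasi-homogeneous ray $x\sim t^\alpha$, $\xi\sim t^\beta$ with $\alpha,\beta\ge 0$ and $\alpha+\beta>0$, one checks that each monomial of $\partial_x a$ and $\partial_\xi a$ has quasi-degree strictly smaller than one of the three vertex monomials $x^{2h}$, $x^{2m}\xi^{2n}$, $\xi^{2k}$, whence $\bigl(\bigabs{\partial_x a}+\bigabs{\partial_\xi a}\bigr)/\bigabs{a}\to 0$ as $\abs{x}+\abs{\xi}\to\infty$.

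For the one-to-one property of the associated one-dimensional operator $A$, I would take the natural symmetric realization
\begin{equation*}
A=\lambda x^{2h}+\mu x^mD_x^{2n}x^m+\nu D_x^nx^{2m}D_x^n+\sigma D_x^{2k},
\end{equation*}
obtained from $B$ by the substitution $X\mapsto x$, $Y\mapsto D_x$; by the same normal-ordering computation this defines the same operator as $a(x,D_x)$. Integration by parts on $\mathcal S(\mathbb R)$ gives
\begin{equation*}
(Au\,\vert\,u)_{L^2}=\lambda\norm{x^hu}_{L^2}^2+\mu\norm{D_x^nx^mu}_{L^2}^2+\nu\norm{x^mD_x^nu}_{L^2}^2+\sigma\norm{D_x^ku}_{L^2}^2.
\end{equation*}
If $Au=0$, Theorem~\ref{thm:4} applied to the hypo-elliptic symbol $a$ shows $u\in\mathcal S(\mathbb R)$; then the identity above, together with $\lambda>0$, forces $x^hu=0$ in $L^2$, so $u\equiv 0$ by continuity. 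This proves that $A$ is one-to-one, and Theorem~\ref{thm:6} then delivers the regularity of $B$.

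The main obstacle I anticipate is the hypo-ellipticity estimate: the sharp assumption $nh+mk\ge hk$ must be invoked precisely to promote $(2m,2n)$ to a genuine vertex of the Newton polygon, and one must simultaneously verify that the lower-order commutator remainder $r(x,\xi)$, whose exponents live inside $[0,2m]\times[0,2n]$, does not destroy the polygon-based decay of the derivatives relative to $a$. The injectivity step, by contrast, is transparent once the energy identity above is in hand, since the positivity of $\lambda$ alone suffices to force $u\equiv 0$.
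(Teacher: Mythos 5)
Your proposal is correct and follows essentially the same route as the paper: reduce to the one-dimensional operator $A=\lambda x^{2h}+\mu x^{m}D^{2n}x^{m}+\nu D^{n}x^{2m}D^{n}+\sigma D^{2k}$, verify that its symbol is hypo-elliptic under \eqref{eqn:43} (the paper simply cites multi-quasi-ellipticity, while you sketch the same Newton-polygon argument and, usefully, make explicit the normal-ordering step needed to put $B$ in the form required by Theorem \ref{thm:6}), prove injectivity from the identity $(Au\,\vert\,u)_{L^2}=\lambda\norm{x^{h}u}_{L^2}^2+\mu\norm{D^{n}x^{m}u}_{L^2}^2+\nu\norm{x^{m}D^{n}u}_{L^2}^2+\sigma\norm{D^{k}u}_{L^2}^2$ together with regularity of $A$, and conclude by Theorem \ref{thm:6}. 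The only blemish is your justification of $\abs{a(x,\xi)}\ge c\,(x^{2h}+\xi^{2k})-C$ by saying the remainder $r$ has ``total degree strictly less than each of the three leading monomials,'' which is false in general (for $h=k=4$, $m=n=3$, $\ell=1$ the remainder monomial $x^{5}\xi^{5}$ has degree $10>2h$); the correct reason is the Newton-polygon domination of the interior points $(2m-\ell,2n-\ell)$ by the weight $1+x^{2h}+x^{2m}\xi^{2n}+\xi^{2k}$, i.e.\ exactly the multi-quasi-elliptic estimate you already invoke for the derivative quotient.
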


\begin{proof}
Consider the operator $A$ on $\mathcal{S}^\prime(\mathbb{R})$
defined by
\begin{equation*}
A = \lambda x^{2h} + \mu x^m D^{2n} x^m + \nu D^n x^{2m} D^n +
\sigma D^{2k},
\end{equation*}
and let $a(x,\xi)$ be its symbol. Conditions \eqref{eqn:43} imply that
 $a(x,\xi)$ is multi-quasi-elliptic, in the sense of definition of page 62 of
\cite{Boggiatto-Buzano-Rodino}. As a matter of facts, an easy computation shows that $a(x,\xi)$
 is  hypo-elliptic.

Let us prove that $A=a(x,D)$ is one-to-one. Given $u\in \mathcal S(\mathbb R)$ we have
\begin{equation*}\begin{split}
(A u\,\vert\,u)_{L^2} &=  \lambda (M^{2h} u\,\vert\, u)_{L^2} + \mu
(M^m D^{2n} M^m u\,\vert\, u)_{L^2} + \nu (D^n M^{2m} D^n u\,\vert\,
u)_{L^2} + \sigma (D^{2k} u\,\vert\,u)_{L^2}
\\
&=\lambda\norm{M^h u}_{L^2}^2 + \mu \norm{D^n M^m u}_{L^2}^2 + \nu
\norm{M^m D^n u}_{L^2}^2 + \sigma \norm{D^k u}_{L^2}^2 \ge 0,
\end{split}\end{equation*}
where $Mu(x)=xu(x)$ is the multiplication operator.
Then, $Au=0$ implies $M^h u=0$ and $D^k u=0$, that is $u=0$. So $A$
is one-to-one, and from Theorem \ref{thm:6} the operator $B$  is
regular.
\end{proof}

Observe that Proposition \ref{pro:18} implies that the symbol of the
operator \eqref{eqn:9} is not hypo-elliptic.

\bigskip
When  $h=k=1$, $\lambda=4$, $\sigma=1/4$, $\mu=\nu=0$ and $q=p=1/2$,
we recover the regularity of \eqref{eqn:44}, and consequently of
\eqref{eqn:45}, as in Section \ref{sec:5}. On the other hand, we can
now treat a quasi homogeneous twisted laplacian of higher order, with
arbitrary coefficients. Consider $\rho,\tau\in\mathbb{R}$ such that
\begin{equation*}
 \rho\tau\ne 0,\qquad
\rho\ne \tau,
\end{equation*}
and let $B$ the operator \eqref{eqn:9} with $\mu=\nu=0$,
$\lambda=(\rho-\tau)^{2h}$, $\sigma=1$, $p=\frac{\rho}{\rho-\tau}$.
Consider moreover
\begin{equation*}
T=\begin{bmatrix} \frac{\rho}{\rho-\tau} & 0 \\ 0 &
\tau\end{bmatrix}.
\end{equation*}
Then by Proposition \ref{pro:6} and Proposition \ref{pro:15} we have
that the operator
\begin{equation}\label{eqn:16}
B_T = (D_y+\rho x)^{2h} + (D_x+\tau y)^{2k}
\end{equation}
is regular.

\subsection{Weyl-Wick transform and positivity of operators.}\label{sec:2}

The problem of proving injectivity of an operator is in general non-trivial, since it is strictly connected to the knowledge of its
spectrum. On the other hand the (strict) positivity is a
sufficient condition for an operator to be one-to-one. In this
section we give a general method to construct hypo-elliptic
polynomial symbols of positive operators. Applying Theorem
\ref{thm:6} we then obtain  regular operators with
non hypo-elliptic symbol.

\begin{definition}
The \emph{Weyl-Wick transform} of the polynomial
\begin{equation}\label{eqn:13}
a(x,\xi)=\sum_{j+k\le m} c_{j,k} x^j \xi^k,\qquad (x,\xi)\in\mathbb R\times\mathbb R,
\end{equation}
 is the polynomial
\begin{equation}\label{eqn:30}
W[a](x,\xi) = \sum_{n\in\mathbb Z_+} \frac{(-1)^n}{4^n n!}\,
\Delta_{x,\xi}^n \sum_{l\in\mathbb Z_+} \frac{(-i)^{l}}{2^{l}l!}\,
\partial_x^l \partial_\xi^l\, a(x,\xi).
\end{equation}
\end{definition}

\begin{proposition}\label{pro:19}
The Weyl-Wick transform of a polynomial $a(x,\xi)$ of order $m$ is a
polynomial of order $m$ and $W$ is invertible on the space of
polynomials in the $(x,\xi)$ variables; in particular
\begin{equation*}
W^{-1}[a](x,\xi) = \sum_{l\in\mathbb{Z}_+} \frac{i^l}{2^l l!}
\partial_x^l\partial_\xi^l \sum_{n\in\mathbb{Z}_+} \frac{1}{4^n n!}
\Delta_{x,\xi}^n a(x,\xi),
\end{equation*}
\end{proposition}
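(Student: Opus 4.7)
The plan is to write $W = Q \circ P$, where
$$P := \sum_{l \ge 0} \frac{(-i)^l}{2^l\, l!}\,\partial_x^l \partial_\xi^l, \qquad Q := \sum_{n \ge 0} \frac{(-1)^n}{4^n\, n!}\, \Delta_{x,\xi}^n.$$
On polynomials, each of $\partial_x \partial_\xi$ and $\Delta_{x,\xi}$ strictly lowers total degree by $2$, so on the finite-dimensional space $\mathcal{P}_m$ of polynomials of degree $\le m$ both operators are nilpotent. Consequently the two series defining $P$ and $Q$ truncate to finite sums on $\mathcal{P}_m$, and $P$ and $Q$ each act there as the identity plus an operator that strictly lowers degree.

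First I would read off the preservation of degree: in the double sum defining $W[a]$, only the term $l=n=0$ (the identity) contributes to the top-degree part, while every other summand strictly lowers degree. Hence $W[a]$ has the same homogeneous component of degree $m$ as $a$, proving that $W$ maps polynomials of degree $m$ to polynomials of degree $m$ and restricts to a linear operator on each $\mathcal{P}_m$.

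For invertibility, I would use that $\partial_x\partial_\xi$ and $\Delta_{x,\xi}$ are commuting nilpotent operators on $\mathcal{P}_m$; for any $A$ nilpotent on a finite-dimensional space, the finite-sum identity $e^{-A}e^{A} = I$ is trivially valid. Applying it with $A = \tfrac{i}{2}\partial_x\partial_\xi$ and then with $A = \tfrac{1}{4}\Delta_{x,\xi}$ yields the explicit inverses
$$P^{-1} = \sum_{l \ge 0} \frac{i^l}{2^l\, l!}\,\partial_x^l \partial_\xi^l, \qquad Q^{-1} = \sum_{n \ge 0} \frac{1}{4^n\, n!}\, \Delta_{x,\xi}^n,$$
and since $W = Q \circ P$, I conclude $W^{-1} = P^{-1} \circ Q^{-1}$, which is exactly the formula asserted. (Because $P$ and $Q$ commute, one could equally well write $W^{-1} = Q^{-1} \circ P^{-1}$.)

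The proof is essentially formal bookkeeping, so there is no substantive obstacle. The only point that deserves a moment of care is making the restriction to the finite-dimensional invariant subspace $\mathcal{P}_m$ explicit, so that all ``exponential'' series collapse to finite sums and the identity $e^{-A}e^A = I$ is justified without invoking any notion of convergence.
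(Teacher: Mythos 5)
Your proof is correct. It is worth noting, though, that the paper does not actually spell out an argument here: its ``proof'' consists of leaving the verification to the reader with a pointer to Problem 24.11 and Theorem 23.3 of \cite{Shubin}, i.e.\ to the standard symbol-calculus facts relating the left symbol, the Weyl symbol and the anti-Wick symbol, from which the composition formula $W=e^{-\frac14\Delta_{x,\xi}}\circ e^{-\frac i2\partial_x\partial_\xi}$ and its inversion can be read off. Your route is a self-contained substitute for that citation: you factor $W=Q\circ P$ directly from the definition, observe that $\partial_x\partial_\xi$ and $\Delta_{x,\xi}$ are nilpotent on the finite-dimensional invariant space $\mathcal P_m$ of polynomials of degree at most $m$ (so all series are finite sums and $e^{-A}e^{A}=I$ is a purely formal identity), note that only the $l=n=0$ term touches the top-degree homogeneous part (giving preservation of the order $m$), and conclude $W^{-1}=P^{-1}\circ Q^{-1}$, which is exactly the stated formula; the commutativity remark is a harmless extra, since the order $P^{-1}\circ Q^{-1}$ is forced by $W=Q\circ P$ anyway. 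What your argument buys is independence from Shubin's quantization machinery and an explicit justification of convergence-free manipulation; what the paper's citation buys is the conceptual interpretation of $W[a]$ as the anti-Wick symbol of the operator $a(x,D)$, which is then used in Proposition \ref{pro:7}. The only phrasing to polish: each of $\partial_x\partial_\xi$ and $\Delta_{x,\xi}$ lowers the total degree by \emph{at least} two (it may annihilate a term), which is all that nilpotency and the leading-term argument require.
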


\begin{proof}
The simple proof is left for the reader, see Problem 24.11 and Theorem 23.3 of
\cite{Shubin}.
\end{proof}

Consider now the function
\begin{equation*}
\Phi_{y,\eta}(x) = \pi^{-1/4} e^{ix\eta} e^{-\frac{1}{2} (y-x)^2},
\end{equation*}
where $y,\eta\in\mathbb R$ are parameters, and the corresponding orthogonal projection in $L^2(\mathbb R)$ on $\Phi_{y,\eta}$:
\begin{equation}
\label{eqn:33}
P_{y,\eta} u(x) = \Bigl(\int u(t) \overline{\Phi_{y,\eta}(t)}\,dt\Bigr) \Phi_{y,\eta}(x),
\end{equation}
for $u\in L^2(\mathbb R)$. We have the following result.

\begin{proposition}\label{pro:7}
Let $A$ be the differential operator  with symbol \eqref{eqn:13}, then we have
\begin{equation}
\label{eqn:32} Au =\frac {1}{2\pi} \int W[a](y,\eta)
(P_{y,\eta}u)\,dy\,d\eta,\qquad u\in\mathcal S(\mathbb R).
\end{equation}
\end{proposition}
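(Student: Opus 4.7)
The plan is to prove \eqref{eqn:32} by showing that the operator $A=a(x,D)$ and the anti-Wick-type operator on the right-hand side have the same Weyl symbol, hence coincide as operators on $\mathcal S(\mathbb R)$. The right-hand side of \eqref{eqn:32} is, by construction, the anti-Wick (Berezin--Toeplitz) quantization of the symbol $W[a]$, so the proposition amounts to asserting that the standard left quantization of $a$ equals the anti-Wick quantization of $W[a]$.

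First I would convert $A$ to Weyl form: by Theorem~23.3 of \cite{Shubin} (already invoked in Proposition~\ref{pro:19}), the Weyl symbol of $A$ is
\[
a^{W}(x,\xi)=\sum_{l\in\mathbb Z_{+}}\frac{(-i)^{l}}{2^{l}l!}\,\partial_{x}^{l}\partial_{\xi}^{l}a(x,\xi),
\]
which is exactly the inner sum in the definition \eqref{eqn:30} of $W[a]$. Hence $W[a]=e^{-\Delta/4}a^{W}$, interpreted as a finite sum since $a^{W}$ is a polynomial. Next I would compute the Weyl symbol of the right-hand side of \eqref{eqn:32}. A direct calculation from the integral kernel $K(x,t)=\Phi_{y,\eta}(x)\overline{\Phi_{y,\eta}(t)}$ of $P_{y,\eta}$, together with the standard formula $\sigma^{W}(x,\xi)=\int e^{-is\xi}K(x+s/2,x-s/2)\,ds$, gives after completing the square and evaluating a Gaussian Fourier integral
\[
\sigma^{W}_{P_{y,\eta}}(x,\xi)=2\,e^{-(x-y)^{2}-(\xi-\eta)^{2}}.
\]
By linearity the operator $\frac{1}{2\pi}\int W[a](y,\eta)P_{y,\eta}\,dy\,d\eta$ has Weyl symbol equal to the convolution $W[a]\ast G$, where $G(x,\xi)=\pi^{-1}e^{-x^{2}-\xi^{2}}$.

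Since $G$ is the heat kernel on $\mathbb R^{2}$ at time $t=1/4$, convolution with $G$ acts on any polynomial as the formal operator $e^{\Delta/4}$; this is the routine Gaussian-moment computation, and the series is finite on polynomials. Therefore
\[
W[a]\ast G \;=\; e^{\Delta/4}W[a] \;=\; e^{\Delta/4}e^{-\Delta/4}a^{W} \;=\; a^{W},
\]
which is the Weyl symbol of $A$, and \eqref{eqn:32} follows. The main obstacle is not conceptual but arithmetical: one has to match normalization constants across the three conventions (standard, Weyl, anti-Wick), in particular the factor $2$ in $\sigma^{W}_{P_{y,\eta}}$ paired with the $(2\pi)^{-1}$ in \eqref{eqn:32} and the Gaussian factors of $\pi$ that turn $G$ into the heat kernel at exactly time $1/4$, and to check that each formal exponential of a constant-coefficient differential operator really collapses to a finite sum on polynomial symbols so that no convergence issue arises.
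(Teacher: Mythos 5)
Your overall route---compare Weyl symbols, using that the operator $\frac1{2\pi}\int b(y,\eta)P_{y,\eta}\,dy\,d\eta$ has Weyl symbol $b\ast G$ with $G(x,\xi)=\pi^{-1}e^{-x^2-\xi^2}$, i.e.\ $e^{\Delta/4}b$ on polynomials---is exactly what the paper's two-line citation of Problem 24.10 and Theorem 23.3 of \cite{Shubin} encapsulates, and your computations of $\sigma^W_{P_{y,\eta}}(x,\xi)=2e^{-(x-y)^2-(\xi-\eta)^2}$ and of the heat-kernel identification are correct. The gap is at the one step you yourself flagged as the place where conventions must be matched, and it is a genuine sign error rather than a bookkeeping nuisance. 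For $A=\sum_{j+k\le m}c_{j,k}x^jD^k$ (this left quantization is the one used throughout the paper, see the proof of Theorem \ref{thm:6}), the Weyl symbol is
\begin{equation*}
a^{W}(x,\xi)=\sum_{l\in\mathbb Z_+}\frac{i^{l}}{2^{l}l!}\,\partial_x^{l}\partial_\xi^{l}a(x,\xi),
\end{equation*}
not the series with $(-i)^l$ that you assert: the operator $xD=\frac12(xD+Dx)+\frac i2$ has Weyl symbol $x\xi+\frac i2$, whereas your formula yields $x\xi-\frac i2$; the $(-i)^l$ series is the conversion in the opposite direction (Weyl symbol to standard symbol, equivalently right symbol of $\sum c_{j,k}D^kx^j$ to Weyl symbol).

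Because of this, the final cancellation does not occur as written. With the correct conversion, the right-hand side of \eqref{eqn:32}, with $W[a]$ as printed in \eqref{eqn:30}, has Weyl symbol $e^{\Delta/4}W[a]=e^{-\frac i2\partial_x\partial_\xi}a$, while $A$ has Weyl symbol $e^{+\frac i2\partial_x\partial_\xi}a$; these agree only when the mixed derivatives of $a$ drop out. A concrete witness: for $a=x\xi$, a direct evaluation of $\frac1{2\pi}\int y\eta\,P_{y,\eta}u\,dy\,d\eta$ gives $xD-\frac i2$, so the anti-Wick symbol of $xD$ is $x\xi+\frac i2$, which is $e^{-\Delta/4}e^{+\frac i2\partial_x\partial_\xi}(x\xi)$ and not $e^{-\Delta/4}e^{-\frac i2\partial_x\partial_\xi}(x\xi)=x\xi-\frac i2$. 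In other words, your argument establishes \eqref{eqn:32} only if the inner sum in \eqref{eqn:30} carries $i^l$ instead of $(-i)^l$ (or if ``the operator with symbol \eqref{eqn:13}'' is read as the right quantization $\sum c_{j,k}D^kx^j$). To make the proof sound you must either correct the standard-to-Weyl formula and state explicitly that \eqref{eqn:30} is to be understood with $i^l$ (in which form the later applications, e.g.\ Corollary \ref{cor:2}, go through unchanged, since only positivity and hypo-ellipticity of $W[a]$ are used), or justify the alternative reading of the quantization; as it stands, the decisive sign is taken on faith at precisely the point where the proof cannot afford it.
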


\begin{proof}
It follows from  Problem 24.10 and Theorem
23.3 of \cite{Shubin}.
\end{proof}

\begin{proposition}\label{pro:16}
If $a$ is a polynomial such that $W[a](x,\xi)> 0$ for almost all
$(x,\xi)\in\mathbb{R}^2$, then the operator $A=a(x,D)$ is
one-to-one. \end{proposition}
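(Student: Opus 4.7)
The plan is to convert the pointwise positivity of $W[a]$ into positivity of the quadratic form $(Au\,\vert\,u)_{L^2}$ via the integral representation from Proposition \ref{pro:7}. The key observation is that each $P_{y,\eta}$ is the orthogonal projection onto $\mathbb{C}\,\Phi_{y,\eta}$, so
\begin{equation*}
(P_{y,\eta}u\,\vert\,u)_{L^2} = \bigabs{(u\,\vert\,\Phi_{y,\eta})_{L^2}}^2.
\end{equation*}
Pairing \eqref{eqn:32} with $u$ and interchanging the inner product with the integral (which is legitimate for $u\in\mathcal S(\mathbb R)$) I would obtain
\begin{equation*}
(Au\,\vert\,u)_{L^2} = \frac{1}{2\pi}\int W[a](y,\eta)\,\bigabs{(u\,\vert\,\Phi_{y,\eta})_{L^2}}^2\,dy\,d\eta.
\end{equation*}

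Next I would take $u\in\mathcal S(\mathbb R)$ with $Au=0$. Then the right-hand side above vanishes, and since $W[a](y,\eta)>0$ for almost every $(y,\eta)$ while the other factor is nonnegative, we must have $(u\,\vert\,\Phi_{y,\eta})_{L^2}=0$ for almost every $(y,\eta)\in\mathbb R^2$. The map $(y,\eta)\mapsto(u\,\vert\,\Phi_{y,\eta})_{L^2}$ is (up to a conjugation) the short-time Fourier transform of $u$ with Gaussian window, hence continuous in the parameters, and therefore vanishes identically. Since the coherent states $\{\Phi_{y,\eta}\}_{(y,\eta)\in\mathbb R^2}$ form a total family in $L^2(\mathbb R)$, this forces $u=0$.

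The only nontrivial analytic ingredient is the last step, that vanishing of all the coefficients $(u\,\vert\,\Phi_{y,\eta})_{L^2}$ forces $u=0$; this is standard and equivalent to injectivity of the Bargmann transform. The main conceptual point to watch is the scope of the word \emph{one-to-one}: the argument above gives injectivity on $\mathcal S(\mathbb R)$, which is what is actually used in the sequel, for instance in Proposition \ref{pro:17}, where hypo-ellipticity of the symbol is first invoked to guarantee that any $u\in\mathcal S'(\mathbb R)$ in the kernel of $A$ in fact belongs to $\mathcal S(\mathbb R)$, after which the quadratic form argument applies.
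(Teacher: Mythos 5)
Your proposal is correct and follows essentially the same route as the paper: both pair the Weyl--Wick representation \eqref{eqn:32} with $u$ to get $(Au\,\vert\,u)_{L^2}=\frac{1}{2\pi}\int W[a](y,\eta)\,\bigabs{(u\,\vert\,\Phi_{y,\eta})_{L^2}}^2\,dy\,d\eta$ and then deduce from $Au=0$ that all coherent-state coefficients vanish, forcing $u=0$ (the paper concludes via Fourier inversion in $\eta$, you via totality of the Gaussian coherent states — an equivalent standard step). Your remark on the scope of ``one-to-one'' (injectivity on $\mathcal S$, with hypo-ellipticity supplying the reduction from $\mathcal S'$ in the applications) is accurate and consistent with how the paper uses the result.
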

\begin{proof}
It follows from \eqref{eqn:32} and  \eqref{eqn:33} that
\begin{equation*}
(Au\,\vert\, u)_{L^2}=\frac 1{2\pi} \int W[a](y,\eta) \Bigabs{\int
u(x)\overline {\Phi_{y,\eta}(x)}\,dx}^2d y d \eta.
\end{equation*}
Then, if $W[a]>0$ almost everywhere, $Au=0$ implies that
\begin{equation*}
\pi^{-1/4} \int e^{-ix\eta} e^{-\frac{1}{2}(y-x)^2}u(x)\,d x=  \int u(x)\overline {\Phi_{y,\eta}(x)}\,dx=0,\qquad (y,\eta)\in\mathbb R\times\mathbb R,
\end{equation*}
and therefore
\begin{equation*}
e^{-\frac{1}{2}(y-x)^2}u(x)=0,\qquad (x,y)\in\mathbb R\times\mathbb R,
\end{equation*}
that is $u=0$.
\end{proof}

Since the Weyl-Wick transform preserves hypo-ellipticity, we have the
following result.

\begin{corollary}\label{cor:2}
Let $a(x,\xi)$ be an hypo-elliptic polynomial such that $a(x,\xi)>
0$ for almost all  $(x,\xi)\in\mathbb R\times \mathbb R$, and let
$r(x,\xi)=W^{-1}[a](x,\xi)$. Write
\begin{equation*}
r(x,\xi)=\sum_{j+k\le m} c_{j,k} x^j \xi^k,
\end{equation*}
where $m\in\mathbb{Z}_+$ and $c_{j,k}\in\mathbb{R}$, $j+k\leq m$ are
uniquely determined by $a$. Then the operator
\begin{equation*}
B=\sum_{j+k\le m} c_{j,k} (x-qD_y)^j(y+p D_x)^k,
\end{equation*}
with $q=1-p$, is  regular.
\end{corollary}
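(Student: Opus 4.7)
The plan is to reduce the statement to a direct application of Theorem~\ref{thm:6} with the polynomial $r$ playing the role of the symbol $a$ in that theorem. To invoke Theorem~\ref{thm:6}, two hypotheses must be checked: (i) $r$ is hypo-elliptic, and (ii) the operator $r(x,D)$ on $\mathcal S'(\mathbb R)$ is one-to-one.

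Step (ii) is the immediate part. By definition $r = W^{-1}[a]$, so $W[r] = a$. The standing hypothesis $a(x,\xi) > 0$ for almost every $(x,\xi)$ is exactly the condition under which Proposition~\ref{pro:16}, applied to the polynomial $r$, yields that $r(x,D)$ is one-to-one. So (ii) is free.

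Step (i) is the place where a small verification is needed, and is the main (mild) obstacle. This is the content of the remark preceding the corollary: the Weyl-Wick transform preserves hypo-ellipticity. To justify it, I would inspect the explicit formulas for $W$ and $W^{-1}$ in \eqref{eqn:30} and in Proposition~\ref{pro:19}. Each is a finite sum of differential operators in $(x,\xi)$, with the identity as the $n=l=0$ term and all other terms strictly lowering the polynomial degree. Consequently, if $a$ has degree $m$, then $r = W^{-1}[a] = a + b$ where $\deg b < \deg a$. Hypo-ellipticity in the sense of the definition of Section~\ref{sec:1} depends only on the behaviour of $|r|$, $|\partial_x r|$, $|\partial_\xi r|$ as $|x|+|\xi| \to \infty$; since the Tarski--Seidenberg argument used in the proof of Theorem~\ref{thm:4} reduces the condition to the comparison of the top-order parts of $r$ and its first derivatives, the lower-order perturbation $b$ does not affect it. Hence $r$ is hypo-elliptic.

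With (i) and (ii) established, Theorem~\ref{thm:6} applied to the polynomial $r$ gives at once that the operator
\[
B = \sum_{j+k\le m} c_{j,k}(x-qD_y)^j(y+pD_x)^k
\]
is regular, which is the required conclusion. No further ingredient is needed: the whole content of the corollary is the observation that the positivity hypothesis on $a$ transfers, via the Weyl--Wick machinery of Proposition~\ref{pro:16}, to the injectivity of $r(x,D)$, and that the reduction of regularity of $B$ to injectivity of $r(x,D)$ is exactly what Theorem~\ref{thm:6} provides.
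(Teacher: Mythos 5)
Your overall route is exactly the paper's (implicit) one: injectivity of $r(x,D)$ comes for free from Proposition~\ref{pro:16} because $W[r]=a>0$ almost everywhere, and then Theorem~\ref{thm:6} applied to the symbol $r$ gives the regularity of $B$; the only point needing verification is that $r=W^{-1}[a]$ is again hypo-elliptic, which the paper simply asserts in the sentence preceding the corollary.

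However, the justification you offer for that point contains a genuine flaw. You argue that $r=a+b$ with $\deg b<\deg a$ and that hypo-ellipticity is insensitive to lower-degree perturbations because it ``reduces to the comparison of the top-order parts''. That principle is false for the class of symbols admitted here: hypo-ellipticity in the sense of this paper is not determined by the principal part, and it is not stable under arbitrary perturbations of lower polynomial degree. For instance, $a(x,\xi)=x^2\xi^2+x^2+\xi^2+1$ is hypo-elliptic (it is multi-quasi-elliptic, precisely the kind of symbol the corollary is meant to cover), while the lower-degree perturbation $a-x^2-\xi^2=x^2\xi^2+1$ is not, since along $x\xi=1$ the symbol stays bounded while $\partial_x(x^2\xi^2)=2x\xi^2$ blows up. So your Step (i) as written would fail for exactly the symbols of interest. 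The correct argument uses the specific structure of $W^{-1}$ in Proposition~\ref{pro:19}: the terms beyond $l=n=0$ are constant multiples of iterated derivatives $\partial_x^{l}\partial_\xi^{l}\Delta_{x,\xi}^{n}a$ of $a$ itself, not arbitrary lower-degree polynomials. By Tarski--Seidenberg, as in the proof of Theorem~\ref{thm:4}, the hypo-ellipticity of $a$ in the sense of this paper upgrades to hypo-ellipticity in the sense of Definition~25.2 of Shubin, which controls \emph{all} derivatives: $\abs{\partial^\alpha a}\le C_\alpha\abs{a}\,(\abs x+\abs\xi)^{-\rho\abs\alpha}$ at infinity for some $\rho>0$. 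Hence every correction term, and every first derivative of a correction term, is $o(\abs a)$ at infinity, so $r=a\bigl(1+o(1)\bigr)$, $r$ does not vanish outside a compact set, and $\bigl(\abs{\partial_x r}+\abs{\partial_\xi r}\bigr)/\abs{r}\to 0$; this is what makes $r$ hypo-elliptic. With this repair, the rest of your proof (Proposition~\ref{pro:16} for injectivity, then Theorem~\ref{thm:6}) is correct and coincides with the paper's intended argument.
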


\subsection{Operators with negative index.}

Now we consider differential operators of first order.

\begin{proposition}\label{pro:9}
Fix $\alpha\in\mathbb{C}$ and a positive integer $m$. Suppose that $(\Im \alpha)^m>0$. Then for every $p\in\mathbb{R}$ the operator in $\mathbb{R}^2$
\begin{equation*}
y+p D_x + \alpha \bigl( x+(p-1)D_y\bigr)^m
\end{equation*}
is  regular.
\end{proposition}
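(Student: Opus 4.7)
The plan is to recognize the operator as a special case of \eqref{eqn:10} and apply Theorem \ref{thm:6}. Since $q=1-p$, we have $x+(p-1)D_y=x-qD_y$, so our operator is
\begin{equation*}
B=(y+pD_x)+\alpha(x-qD_y)^m,
\end{equation*}
which matches \eqref{eqn:10} with the only nonzero coefficients $c_{0,1}=1$ and $c_{m,0}=\alpha$. The associated one-variable operator is therefore $A=D+\alpha x^m$ with symbol $a(x,\xi)=\xi+\alpha x^m$. Thus I need to check two things: (a) that $a$ is hypo-elliptic in the sense of the definition in Section \ref{sec:1}, and (b) that $A$ is one-to-one on $\mathcal S'(\mathbb R)$.

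For hypo-ellipticity, the hypothesis $(\Im\alpha)^m>0$ guarantees $\Im\alpha\neq0$. For real $(x,\xi)$ one has $\Im a(x,\xi)=(\Im\alpha)x^m$, so $a$ vanishes only at the origin. For the limit condition, $|\partial_x a|+|\partial_\xi a|=m|\alpha||x|^{m-1}+1$ while $|a|\ge|\Im\alpha||x|^m$ when $|x|$ is large and $|a|\ge|\xi|-|\alpha|$ when $|x|$ stays bounded and $|\xi|\to\infty$; in both regimes the quotient tends to $0$.

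For injectivity, observe that $A$ is regular by Theorem \ref{thm:4}, so any $u\in\mathcal S'(\mathbb R)$ with $Au=0$ already belongs to $\mathcal S(\mathbb R)$ and is a classical solution of the ODE $u'=-i\alpha x^m u$. The general solution is
\begin{equation*}
u(x)=C\exp\!\Bigl(-\tfrac{i\alpha}{m+1}x^{m+1}\Bigr),\qquad|u(x)|=|C|\exp\!\Bigl(\tfrac{\Im\alpha}{m+1}x^{m+1}\Bigr).
\end{equation*}
I now argue by parity of $m$: if $m$ is even, then $m+1$ is odd so $x^{m+1}$ is unbounded above and below, and since $\Im\alpha\neq0$ the exponential grows super-polynomially on one side; if $m$ is odd, the assumption $(\Im\alpha)^m>0$ forces $\Im\alpha>0$ while $x^{m+1}\ge0$ with $x^{m+1}\to+\infty$ as $|x|\to\infty$, so again the exponential grows super-polynomially. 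In either case no nonzero solution belongs to $\mathcal S(\mathbb R)$, hence $A$ is one-to-one, and Theorem \ref{thm:6} yields the regularity of $B$.

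The only delicate point is the parity case analysis: it is precisely the condition $(\Im\alpha)^m>0$ (and not merely $\Im\alpha\neq0$) that is needed to rule out a Schwartz solution when $m$ is odd, since otherwise $\Im\alpha<0$ would make $|u|$ decay and the operator would have a nontrivial Schwartz kernel. Everything else is a direct verification.
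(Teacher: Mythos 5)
Your proposal is correct and follows essentially the same route as the paper: reduce to Theorem \ref{thm:6} with $a(x,\xi)=\xi+\alpha x^m$, note that $(\Im\alpha)^m>0$ gives hypo-ellipticity, and obtain injectivity by using Theorem \ref{thm:4} to place the kernel in $\mathcal S(\mathbb R)$ and then checking that the classical solutions $\beta e^{-i\alpha x^{m+1}/(m+1)}$ are not Schwartz. Your explicit verification of the hypo-ellipticity limit and the parity case analysis simply fill in details the paper leaves implicit.
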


\begin{proof}
Consider the operator $A$ with symbol
\begin{equation}\label{eqn:46}
a(x,\xi) = \xi+\alpha x^m.
\end{equation}
Thanks to Theorem \ref{thm:6}, it is enough to prove that $a$ is  hypo-elliptic and $A$ is one-to-one. The condition $(\Im \alpha)^m>0$ implies  the  hypo-ellipticity of $a$.

From Theorem \ref{thm:4} the kernel of $A$ is then contained in $\mathcal{S}(\mathbb{R})$. On the other hand the classical solutions of $Au=0$ are the functions
\begin{equation*}
u(x) = \beta e^{-i\alpha \frac{x^{m+1}}{m+1}},\quad \beta\in\mathbb{C},
\end{equation*}
which do not belong to $\mathcal{S}(\mathbb{R})$ for $(\Im \alpha)^m>0$. This implies that $A$ is one-to-one, and  the proof is complete.
\end{proof}

In view of the hypotheses of Theorem \ref{thm:9}, it is interesting to observe that
when  $m$ is odd and $\Im \alpha>$ is positive,  the operator $A=D+\alpha x^m$ has index
 $-1$, hence in particular $\mathcal N(A^\ast)\not = 0$.

\end{document}